\newcommand{\red}[1]{{\color{red}#1}} \definecolor{darkgreen}{rgb}{0,0.6,0}
 \renewcommand{\red}[1]{#1}
\newtheorem{theorem}{Theorem}
\newtheorem{lemma}[theorem]{Lemma}
\newtheorem{prop}[theorem]{Proposition}
\newtheorem{cor}[theorem]{Corollary}
\theoremstyle{definition}
\newtheorem{definition}[theorem]{Definition}
\renewcommand{\mod}[1]{{\ifmmode\text{\rm\ (mod~$#1$)}\else\discretionary{}{}{\hbox{ }}\rm(mod~$#1$)\fi}}
\newcommand{\legendre}[2]{\genfrac{(}{)}{}{}{#1}{#2}}
\newcommand\floor[1]{\lfloor#1\rfloor}
\renewcommand{\L}{{\mathcal L}}
\begin{document}

\title{Factorization tests and algorithms arising from counting modular forms and automorphic representations}
\author{Miao Gu and Greg Martin}
\address{Mathematics Department \\ Duke University, Box 90320 \\Durham, NC, USA \ 27708--0320}
\email{miao.gu@duke.edu}
\address{Department of Mathematics \\ University of British Columbia \\ Room 121, 1984 Mathematics Road \\ Vancouver, BC, Canada \ V6T 1Z2}
\email{gerg@math.ubc.ca}
\subjclass[2010]{11N25, 11Y16, 11N60, 11Y05, 11F70.}
\maketitle

\begin{abstract}
A theorem of Gekeler compares the number of non-isomorphic automorphic representations associated with the space of cusp forms of weight $k$ on~$\Gamma_0(N)$ to a simpler function of $k$ and~$N$, showing that the two are equal whenever $N$ is squarefree. We prove the converse of this theorem (with one small exception), thus providing a characterization of squarefree integers. We also establish a similar characterization of prime numbers in terms of the number of Hecke newforms of weight $k$ on~$\Gamma_0(N)$.

It follows that a hypothetical fast algorithm for computing the number of such automorphic representations for even a single weight $k$ would yield a fast test for whether $N$ is squarefree. We also show how to obtain bounds on the possible square divisors of a number $N$ that has been found to not be squarefree via this test, and we show how to \red{probabilistically} obtain the complete factorization of the squarefull part of $N$ from the number of such automorphic representations for two different weights. If in addition we have the number of such Hecke newforms for even a single weight $k$, then we show how to probabilistically factor $N$ entirely.
All of these computations could be performed quickly in practice, given the number(s) of automorphic representations and modular forms as input.
\end{abstract}

\section{Introduction}

An integer is {\em squarefree} if it is not divisible by the square of any prime. Deciding whether a number is squarefree is trivial if one has its complete factorization; however, we currently lack fast algorithms for factoring large integers, nor do we have any alternate characterization of squarefree numbers that allows for a faster test (unlike the case of polynomials over a field of characteristic $0$, for example, where a polynomial is squarefree if and only if it is coprime to its derivative). The origin of this paper is an interesting connection, related to squarefreeness, between the number of certain automorphic representations and the value of a very simple function.

\begin{definition} 
Let $A(k,N)$ denote the number of non-isomorphic automorphic representations associated with the space of cusp forms of weight $k$ on~$\Gamma_0(N)$. An explicit formula for $A(k,N)$ as a linear combination of multiplicative functions of $N$, as derived by the second author~\cite{M}, is given in Proposition~\ref{AkN formula} below. \red{(An equivalent way to describe the quantity $A(k,N)$ is the number of weight-$k$ Hecke newforms of level dividing~$N$.)}
\end{definition}

\begin{definition}  \label{GkN def}
For any positive integer $N$ and any positive even integer $k$, define
\[
G(k,N)= \frac{k-1}{12}N-\frac{1}{2}+c_2(k)\legendre {-4} N+c_3(k)\legendre{-3} N.
\]
The quantities $\legendre {-4} N$, $\legendre {-3} N$, $c_2(k)$, and $c_3(k)$ are given in Definitions~\ref{-4N -3N def} and~\ref{c2 c3 def} below; for now, we emphasize that they depend only upon the residue classes of $N$ and $k$ modulo~$12$. Consequently, $G(k,N)$ can be computed extremely rapidly, even without knowing the factorization of~$N$.
\end{definition}

Gekeler~\cite{G} proved that the function $G(k,N)$ (for which he gave a different but equivalent expression) is equal to $A(k,N)$ when $N\ge2$ is squarefree. Our first theorem is a converse of this statement (with one small exception and another small case where the expected inequality is reversed).

\begin{theorem}  \label{main thm}
Let $N\ge2$ be an integer and $k$ a positive even integer.
\begin{itemize}
\item When $N$ is squarefree, or when $k=2$ and $N=9$, we have $G(k,N)=A(k,N)$.
\item When $k=2$ and $N=4$, we have $G(k,N)<A(k,N)$.
\item In all other cases, $G(k,N)>A(k,N)$.
\end{itemize}
\end{theorem}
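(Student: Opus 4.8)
The plan is to work with the difference $D(k,N):=G(k,N)-A(k,N)$ and pin down its sign. By Proposition~\ref{AkN formula} — or, equivalently, because Atkin--Lehner theory gives $A(k,N)=\sum_{d\mid N}\mu(N/d)\dim S_k(\Gamma_0(d))$, while for $k\ge4$ one has the classical dimension formula $\dim S_k(\Gamma_0(d))=\frac{k-1}{12}\psi(d)-\frac12\nu_\infty(d)+c_2(k)\nu_2(d)+c_3(k)\nu_3(d)$ (with $\psi(d)=[\mathrm{SL}_2(\Z):\Gamma_0(d)]$ and $\nu_\infty,\nu_2,\nu_3$ the numbers of cusps and of elliptic points of orders $2$ and $3$; for $k=2$ the right-hand side plus $1$ gives the genus $\dim S_2(\Gamma_0(d))$) — one can write $A(k,N)$ as the same linear combination of the multiplicative functions $\mu*\psi$, $\mu*\nu_\infty$, $\mu*\nu_2$, $\mu*\nu_3$, the extra $1$ in the $k=2$ case disappearing since $(\mu*1)(N)=0$ for $N\ge2$. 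These four convolutions take the values $p$, $1$, $\legendre{-4}{p}$, $\legendre{-3}{p}$ at every prime $p$ — this is exactly the content of Gekeler's identity — so comparing term by term with $G(k,N)$ yields, for all $N\ge2$,
\begin{multline*}
D(k,N)=\frac{k-1}{12}\bigl(N-(\mu*\psi)(N)\bigr)+\tfrac12\bigl((\mu*\nu_\infty)(N)-1\bigr)\\
{}+c_2(k)\Bigl(\legendre{-4}{N}-(\mu*\nu_2)(N)\Bigr)+c_3(k)\Bigl(\legendre{-3}{N}-(\mu*\nu_3)(N)\Bigr),
\end{multline*}
and every correction vanishes when $N$ is squarefree, which re-establishes the first bullet away from $N=9$.

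Next I would work out the prime-power values of these convolutions. One finds $(\mu*\psi)(N)=N\prod_{p^2\mid N}(1-p^{-2})$, so $N-(\mu*\psi)(N)\ge0$, with equality iff $N$ is squarefree, and $N-(\mu*\psi)(N)\ge1$ for non-squarefree $N$, with equality precisely when $N=p^2$. One also finds $(\mu*\nu_\infty)(p^e)=\phi\bigl(p^{\floor{e/2}}\bigr)\ge1$, so $(\mu*\nu_\infty)(N)\ge1$ always and the second summand of $D$ is nonnegative. Finally, a short case analysis on the prime-power decomposition of $N$ shows that $\mu*\nu_2$ and $\mu*\nu_3$ vanish as soon as some prime other than $2$ (resp.\ other than $3$) divides $N$ to a power exceeding $1$, the only remaining non-squarefree contributions being $(\mu*\nu_2)(4)=-1$ and $(\mu*\nu_3)(9)=-1$; consequently $\bigl|\legendre{-4}{N}-(\mu*\nu_2)(N)\bigr|\le1$ and $\bigl|\legendre{-3}{N}-(\mu*\nu_3)(N)\bigr|\le1$ for every $N$. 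Since $|c_2(k)|=\tfrac14$ and $|c_3(k)|\le\tfrac13$, the last two terms of $D$ contribute at most $\tfrac14+\tfrac13=\tfrac7{12}$ in absolute value; writing $S(N)$ for the sum of the first two (nonnegative) terms, we get $D(k,N)\ge S(N)-\tfrac7{12}$, hence $D(k,N)>0$ whenever $S(N)>\tfrac7{12}$.

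For non-squarefree $N$ we have $S(N)\ge\frac{k-1}{12}$, so $S(N)>\tfrac7{12}$ automatically as soon as $k\ge10$, giving $D(k,N)>0$ for all such $N$ and all weights $k\ge10$. For $k\in\{2,4,6,8\}$ the opposite inequality $S(N)\le\tfrac7{12}$ forces both $N-(\mu*\psi)(N)$ and $(\mu*\nu_\infty)(N)$ to be small; the bound on $(\mu*\nu_\infty)(N)$ confines the squarefull part of $N$ to a low power of $2$ or $3$, and then the bound on $N-(\mu*\psi)(N)$ leaves only finitely many moduli — for $k=2$ just $N\in\{4,8,9,12,20,24,28\}$, and for $k\in\{4,6,8\}$ only $N=4$ (together with $N=8$ when $k=4$). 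For each surviving pair $(k,N)$ one evaluates $D(k,N)$ straight from the displayed identity; the result is $D(2,4)=-\tfrac12<0$, $D(2,9)=0$, and $D(k,N)\ge\tfrac12>0$ in every other case — precisely the three bullets of the theorem.

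The principal obstacle is this final, finite verification, and above all being sure that the list of small non-squarefree moduli is complete: because $c_2(k)$ and $c_3(k)$ change sign with $k\bmod12$, one cannot pass to a single worst-case weight, and because the $k=2$ main term $\frac{1}{12}\bigl(N-(\mu*\psi)(N)\bigr)$ can be as small as $\frac{1}{12}$, the exceptional moduli $N=4$ and $N=9$ lie exactly where the crude inequality $D(k,N)\ge S(N)-\tfrac7{12}$ ceases to decide the sign, so they — and only they — must be separated out by exact computation rather than by estimation. Everything else (deriving the closed form for $D(k,N)$ and the prime-power identities for $\mu*\psi$, $\mu*\nu_\infty$, $\mu*\nu_2$, $\mu*\nu_3$) is routine, though it is exactly what makes the finite reduction possible.
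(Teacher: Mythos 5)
Your proposal is correct and follows essentially the same route as the paper: it derives the same explicit formula for $G(k,N)-A(k,N)$ (your $\mu\ast\psi$, $\mu\ast\nu_\infty$, $\mu\ast\nu_2$, $\mu\ast\nu_3$ coincide with $Ns_0^*$, $\nu_\infty^*$, $\nu_2^*$, $\nu_3^*$), bounds the elliptic-term contribution by $\tfrac14+\tfrac13$, and reduces to a finite verification driven by the positivity of the first two summands. The only difference is organizational --- you split cases by weight and by the size of $S(N)$, while the paper splits by which prime's square divides $N$ --- and your finite list (which omits $(2,16)$ by using the $\nu_\infty^*$ term slightly more sharply) checks out against the paper's computations.
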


\begin{cor}  \label{main cor}
Let $N\ge10$. Then for any positive even integer $k$, we have that $N$ is squarefree if and only if $A(k,N) = G(k,N)$.
\end{cor}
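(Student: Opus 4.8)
The plan is to read Corollary~\ref{main cor} directly off of Theorem~\ref{main thm}; the only point is to check that the hypothesis $N\ge10$ disposes of the two exceptional cases appearing in the theorem.

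For the forward implication, suppose $N\ge10$ is squarefree. Then in particular $N\ge2$, so the first bullet of Theorem~\ref{main thm} applies and gives $A(k,N)=G(k,N)$ for every positive even integer~$k$; no constraint on $k$ is needed here.

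For the reverse implication I would argue contrapositively: assume $N\ge10$ is not squarefree, and let $k$ be any positive even integer. Since $4<10$ and $9<10$, the pair $(k,N)$ is neither $(2,4)$ nor $(2,9)$, so it is not covered by the first two bullets of Theorem~\ref{main thm}; hence the third bullet applies and yields $G(k,N)>A(k,N)$, and in particular $A(k,N)\ne G(k,N)$. Combining the two implications gives the claimed equivalence for every positive even~$k$.

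There is no genuine obstacle at the level of the corollary itself: all of the content lies in Theorem~\ref{main thm}, and in particular in the strict inequality $G(k,N)>A(k,N)$ for non-squarefree $N$ outside the small exceptional set. The one thing worth confirming is that the exceptional pairs really are exactly $(2,4)$ and $(2,9)$ and that both have $N<10$, so that the threshold $N\ge10$ is the cleanest cutoff making ``$N$ is squarefree'' and ``$A(k,N)=G(k,N)$'' coincide; it is precisely the non-squarefree integer $N=9$, with $A(2,9)=G(2,9)$, that forces the cutoff to lie above~$9$.
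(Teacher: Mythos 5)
Your proposal is correct and matches the paper's (implicit) argument exactly: the corollary is read off from Theorem~\ref{main thm} by noting that the hypothesis $N\ge10$ excludes the exceptional pairs $(k,N)=(2,4)$ and $(2,9)$, so that for non-squarefree $N$ the third bullet gives the strict inequality $G(k,N)>A(k,N)$. Your closing observation that the non-squarefree value $N=9$ with $A(2,9)=G(2,9)$ is what forces the cutoff above $9$ is also the right justification for the threshold.
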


We pause to dwell upon some hypothetical significance of this corollary. As of the writing of this paper, nobody has found an algorithm that determines whether a positive integer $N$ is squarefree or not that is significantly faster than factoring $N$; in particular, we do not know any polynomial-time algorithm for testing squarefreeness. The standard way to compute $A(k,N)$ is through factoring $N$ (as per the formula in Proposition~\ref{AkN formula}). However, if someone were to develop an alternate way to calculate $A(k,N)$ that was much faster, then Corollary~\ref{main cor} would provide a fast way to test the number $N$ for squarefreeness. Indeed, it is tantalizing to observe that such an alternate calculation of $A(k,N)$ needn't be particularly robust: {\em a polynomial-time algorithm for calculating, given a number $N$, even one single value of $A(k,N)$---perhaps for a special even number $k$ depending upon $N$---would yield a polynomial-time algorithm for testing whether $N$ is squarefree} (as long as $k$ were not astronomically large). {We could even obtain the same outcome with a fast algorithm yielding a sufficiently good upper bound for $A(k,N)$, or one that calculated a positive linear combination of $A(k,N)$ for several values of~$k$.} \red{It is admittedly difficult to speculate about what such an algorithm would entail: any method that actually enumerated Hecke eigenforms, for example, would be slower than factoring $N$ in practice because the number of such eigenforms is essentially linear in~$N$ (and hence exponentially large in the length of~$N$).}

One can extract more information from this idea than simply whether $N$ is squarefree. For example, in Proposition~\ref{trig bounds prop} below, we give upper and lower bounds (depending upon the difference between $G(k,N)$ and $A(k,N)$ for a single value of $k$) for the size of any integer $d$ whose square divides~$N$. If we have access to two distinct values of $A(k,N)$ for the same number $N$, we can do even better, as the next theorem demonstrates. Recall that an integer is {\em squarefull} if every prime dividing it divides it to at least the second power; every number $N$ has a unique factorization of the form $N=EL$ where $E$ is squarefree, $L$ is squarefull, and $\gcd(E,L)=1$.

\begin{theorem}  \label{2 inputs thm}
Let $N$ be a positive integer. Suppose we know two values $A(k_1,N)$ and $A(k_2,N)$ for distinct positive even integers $k_1$ and $k_2$. Then we can quickly obtain the complete factorization of the squarefull part of~$N$. More precisely, we can, in probabilistic polynomial time, calculate distinct primes $p_1,\dots,p_\ell$, integers $e_1,\dots,e_\ell \ge 2$, and a squarefree number $E$ that is relatively prime to $p_1\cdots p_\ell$, satisfying $N=Ep_1^{e_1}\cdots p_\ell^{e_\ell}$.
\end{theorem}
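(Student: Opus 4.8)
The plan is to reduce this to extracting, from the two inputs, the exact value of one explicit multiplicative function at $N$, and then to run a randomized factoring routine on the data that value encodes.

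First I would replace the inputs $A(k_1,N),A(k_2,N)$ (given together with $N,k_1,k_2$ in binary) by the two defects $\Delta_i:=G(k_i,N)-A(k_i,N)$, which are computable in polynomial time because $G(k_i,N)$ depends only on $N$, $k_i$, and their residues modulo~$12$ (Definition~\ref{GkN def}). By Theorem~\ref{main thm}, after disposing of the finitely many small exceptional $N$ by brute force, we may assume $\Delta_1,\Delta_2\ge0$ with $\Delta_1=\Delta_2=0$ exactly when $N$ is squarefree; in that case we output $E=N$ and $\ell=0$. Henceforth $N$ is not squarefree.

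The engine is Proposition~\ref{AkN formula}, which writes $A(k,N)$ as a linear combination of a handful of multiplicative functions of $N$ with coefficients depending only on $k$ (indeed only on $k\bmod 12$), the coefficients agreeing on squarefree $N$ with those of $G(k,N)$. Hence $\Delta_i$ is that same combination of the corresponding ``defect functions''. The crucial features are: (i) the main defect is $N-\Psi_0(N)$, where $\Psi_0$ is the multiplicative function occurring as the $\tfrac{k-1}{12}$-term in Proposition~\ref{AkN formula}; one checks $\Psi_0(N)=N\prod_{p^{2}\mid N}(1-p^{-2})$, so $N-\Psi_0(N)$ is positive for non-squarefree $N$; (ii) the ``elliptic'' defects are $O(1)$ for non-squarefree $N$, since the elliptic-point counts depend only on $\operatorname{rad}(N)$ (apart from degenerate vanishing at $4\mid N$ and $9\mid N$), so the M\"obius transform of such a count is supported on squarefree integers up to an $O(1)$ correction; and (iii) the cusp defect has a $k$-independent coefficient. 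Consequently, forming $\Delta_1-\Delta_2$ cancels the cusp defect and leaves
\[
  \Delta_1-\Delta_2=\tfrac{k_1-k_2}{12}\bigl(N-\Psi_0(N)\bigr)+\eta,
\]
with $\eta$ bounded by an absolute constant. Therefore $N-\Psi_0(N)$ — hence $\Psi_0(N)$ and the rational $V:=\Psi_0(N)/N=\prod_{p^{2}\mid N}(1-p^{-2})$ — is one of $O(1)$ explicit candidates, namely the integers lying within an absolute constant of $12(\Delta_1-\Delta_2)/(k_1-k_2)$; we carry all of them forward and discard the spurious ones at the verification step below, so the size of $|k_1-k_2|$ never matters.

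The last step, which I expect to be the main obstacle, is to turn $N$ and $V=\prod_{p\mid L}(1-p^{-2})$ (with $L$ the squarefull part of $N$) into the factorization of $L$. I would do this iteratively. Since $\operatorname{rad}(L)^{2}\mid N$, a short computation shows that $D:=N/\gcd(N,\Psi_0(N))$ is a nonempty product of positive powers of primes dividing $L$ (in the generic case that $L$ has a prime factor $\ge5$, its largest prime appears in $D$); so it suffices to extract one prime factor $p$ of $D$, read off $e_p=v_p(N)$, replace $N$ by $N/p^{e_p}$ and $\Psi_0(N)$ by $\Psi_0(N)/\Psi_0(p^{e_p})$ (both computable), and repeat until the residual part is squarefree — that residual part is then the output $E$, and the primes collected along the way are $p_1,\dots,p_\ell$ with their exponents. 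Extracting a prime factor of $D$ is immediate when $D$ is a prime power (it is literally a square $p^{2}$ when $L$ has a single prime factor), but in general it is a genuine factoring task, and this is precisely where probabilistic polynomial time is needed: one applies a randomized reduction in the spirit of ``factor $n$ given $\phi(n)$'', exploiting the extra datum $\prod(1-p^{-2})$ over exactly the primes still dividing $N$. Finally, for each surviving candidate from the previous step we recompute $A(k_1,N)$ from the produced factorization via Proposition~\ref{AkN formula} and keep only the factorization matching the input. The delicate points are the $O(1)$ accounting of the elliptic terms and, above all, arranging the randomized factoring subroutine so that it only ever factors divisors of the squarefull part; the squarefree part $E$ is never factored, which is exactly why the theorem promises the squarefull part only.
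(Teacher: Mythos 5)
Your reduction to recovering $Ns_0^*(N)$ is workable and genuinely different from the paper's route: the paper first pins down $\nu_2^*(N)$ and $\nu_3^*(N)$ \emph{exactly} from a single value $A(k,N)$ (Lemmas~\ref{factor squarefull lemma 1.5} and~\ref{factor squarefull lemma 1.75}, which test whether $N/4$ or $N/9$ is squarefree), and then solves the resulting $2\times 2$ linear system exactly for \emph{both} $s_0^*(N)$ and $\nu_\infty^*(N)$; you instead difference the two defects to isolate $s_0^*(N)$ up to $O(1)$ integer candidates. The genuine gap is in the step you yourself flag as the main obstacle: factoring the divisor of the squarefull part $L$ that $s_0^*(N)$ hands you. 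The reduction ``factor $n$ given $\phi(n)$'' requires as input a multiple of $\phi$ of the number being factored, and you never exhibit one. The paper's design exists precisely to make that multiple available: writing $s_0^*(L)=u/d$ in lowest terms, the denominator $d>1$ divides $L$, and $d\cdot\nu_\infty^*(L)$ is visibly a multiple of $\phi(d)$ (Lemma~\ref{factor squarefull lemma}). Your pipeline discards $\nu_\infty^*(N)$ by construction (it is cancelled in $\Delta_1-\Delta_2$), and you cannot recover it by the same ``$O(1)$ candidates'' trick: the combination $(k_2-1)\Delta_1-(k_1-1)\Delta_2$ that isolates $\nu_\infty^*$ carries an error of size $O(k_1+k_2)$ before division by $k_2-k_1$, which can be exponentially many candidates in the input length when $k_1,k_2$ are large and close.

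The gap is fillable without $\nu_\infty^*$, but only via an argument you do not give: with $s_0^*(N)=u/d$ in lowest terms and $g=\bigl(\prod_{p\mid L}p^2\bigr)/d$, one has $\prod_{p\mid L}(p^2-1)=ug$, hence $\phi(d)$ divides $d\cdot\prod_{p\mid L}(p-1)$, which divides $dug$, which divides $duN$ since $g\mid N$; so $duN$ is a computable multiple of $\phi(d)$ and the Shoup reduction applies to $d$, after which the recursion proceeds as in the paper. (I would also replace your $D=N/\gcd(N,\Psi_0(N))$ by this denominator $d$, for which the divisibility is clean; for your $D$ the existence of a usable $\phi$-multiple still needs to be argued.) A secondary issue: your end-of-pipeline verification is not as innocent as stated, because one cannot directly check that the residual factor $E$ is squarefree---that is the hard problem---so discarding spurious candidates by ``recomputing $A(k_1,N)$ from the produced factorization'' needs a justification in the spirit of Theorem~\ref{main thm} that a wrong squarefull part forces a mismatch. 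The paper sidesteps both difficulties by computing $s_0^*(N)$ and $\nu_\infty^*(N)$ exactly, so that no candidate enumeration or a posteriori verification is needed.
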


\noindent (The theorem is valid, though uninteresting, when $N$ itself is squarefree, since $\ell=0$ is permitted. We remark that it would suffice to have two values $A(k_1,M)$ and $A(k_2,M)$ for any multiple $M$ of $N$, since one can easily deduce the factorization of the squarefull part of $N$ from the factorization of the squarefull part of~$M$.) We do not explicitly report the complexity of the polynomial-time algorithms in this paper, although many of them are extremely fast in practice.

\red{As far as we are aware, these results represent the first known applications of enumerative results in the theory of automorphic representations to computational complexity questions related to integer factorization. In this vein, it seems worth pointing out a similar application of the dimension of the space of cusp forms on $\Gamma_0(N)$ to primality testing (even though, in contrast to deciding whether a number is squarefree, primality testing is already in quite an acceptable state).}

\begin{definition}  \label{B def}
Let $B(k,N)$ denote the dimension of the space of weight-$k$ newforms on $\Gamma_0(N)$. (The function $B(k,N)$ is often denoted by $g_0^\#(k,N)$; an explicit formula for this function as a linear combination of multiplicative functions of $N$ was given by the second author~\cite[Theorem~1]{M} and is summarized in Proposition~\ref{B prop} below.)
\end{definition}

\begin{definition}  \label{H def}
Define the function
\begin{equation}
H(k,N) = G(k,N) - B(k,1) = G(k,N) - \bigg( \frac{k-7}{12} + c_2(k) + c_3(k) + \delta_2(k) \bigg),
\end{equation}
where $G(k,N)$ is as in Definition~\ref{GkN def}; note that $H(k,N)$ can be computed extremely rapidly, even without knowing the factorization of~$N$.
\end{definition}

Our second theorem demonstrates that a polynomial-time algorithm for calculating $B(k,N)$ would yield a very fast algorithm for testing whether $N$ is prime:

\red{\begin{theorem}  \label{prm test thm}
Let $N\ge2$ be an integer and $k$ a positive even integer.
\begin{itemize}
\item When $N$ is prime, or when $k=4$ and $N=6$, or when $k=2$ and $N=6$, $9$, $10$, $14$, $15$, $21$, $26$, $35$, $39$, $65$, or $91$, we have $H(k,N)=B(k,N)$.
\item When $k=2$ and $N=4$, we have $H(k,N)<B(k,N)$.
\item In all other cases, $H(k,N)>B(k,N)$.
\end{itemize}
\end{theorem}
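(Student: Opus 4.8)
The plan is to reduce Theorem~\ref{prm test thm} to Theorem~\ref{main thm} by means of a single identity, after which only finite checking remains. The key observation is that $A(k,N)$, being the number of weight-$k$ newforms of level dividing $N$, equals $\sum_{d\mid N}B(k,d)$ (group the newforms by their exact level), where each $B(k,d)$ is a nonnegative integer (the dimension of the space of weight-$k$ newforms on $\Gamma_0(d)$, as in Definition~\ref{B def}). Since $H(k,N)=G(k,N)-B(k,1)$, separating out the terms $d=1$ and $d=N$ gives
\[
H(k,N)-B(k,N)=\bigl(G(k,N)-A(k,N)\bigr)+\sum_{\substack{d\mid N\\ 1<d<N}}B(k,d),
\]
and since the last sum is nonnegative we get $H(k,N)-B(k,N)\ge G(k,N)-A(k,N)$, with equality exactly when $B(k,d)=0$ for every divisor $d$ of $N$ with $1<d<N$.

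First I would dispose of every case in which $N$ is not squarefree. By Theorem~\ref{main thm} we have $G(k,N)>A(k,N)$, hence $H(k,N)>B(k,N)$, unless $(k,N)\in\{(2,4),(2,9)\}$. When $N=4$ the only divisor strictly between $1$ and $4$ is $2$, and $B(2,2)=0$ (by Proposition~\ref{B prop}), so $H(2,4)-B(2,4)=G(2,4)-A(2,4)<0$; when $N=9$ the only intermediate divisor is $3$, and $B(2,3)=0$, so $H(2,9)=B(2,9)$. This matches the two exceptional clauses for $k=2$ with $N=4$ and with $N=9$.

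Next suppose $N\ge2$ is squarefree, so that $G(k,N)=A(k,N)$ (Theorem~\ref{main thm}) and therefore $H(k,N)-B(k,N)=\sum_{d\mid N,\,1<d<N}B(k,d)$. If $N$ is prime this sum is empty and $H(k,N)=B(k,N)$, giving the primality clause. If $N$ is composite, the sum vanishes precisely when $B(k,d)=0$ for all proper nontrivial divisors $d\mid N$, so in particular every prime factor of $N$ must lie in $P_k:=\{p\text{ prime}:B(k,p)=0\}$. From the explicit formula of Proposition~\ref{B prop} (recovering, for $k=2$, the classical list of primes $p$ with $B(2,p)=0$) I would show that
\[
P_2=\{2,3,5,7,13\},\qquad P_4=\{2,3\},\qquad P_6=P_{12}=\{2\},\qquad P_k=\emptyset\ \text{otherwise};
\]
because the newform dimension eventually grows with both $k$ and the level, one has $B(k,p)>0$ outside an explicit finite set of pairs $(k,p)$, so this is a routine verification.

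Finally, with $N$ squarefree and composite: if $N$ has a prime factor outside $P_k$, then that prime is a proper divisor $d$ with $B(k,d)>0$, so $H(k,N)>B(k,N)$; if all prime factors of $N$ lie in $P_k$ but $N$ has at least three of them, then two of its prime factors have a product $pq<N$ with $B(k,pq)>0$ --- for $k=2$ because three distinct primes cannot have all three of their pairwise products lying in $\{6,10\}$ (one checks $B(2,pq)>0$ for each of $14,15,21,26,35,39,65,91$), and for the remaining values of $k$ because $P_k$ has fewer than three elements --- so again $H(k,N)>B(k,N)$. The only surviving case is $N=pq$ with distinct $p,q\in P_k$, where the sum equals $B(k,p)+B(k,q)=0$ and hence $H(k,N)=B(k,N)$; listing these products gives $N=6$ when $k=4$ and $N\in\{6,10,14,15,21,26,35,39,65,91\}$ when $k=2$, with no further cases when $k\ge6$. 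Assembling all of the above reproduces the trichotomy in the theorem. The one real obstacle is this finite case-analysis: determining $P_k$ for every even $k$ and checking positivity of the relevant $B(k,pq)$ requires some care with the newform dimension formula in the small-weight, small-level corner where the elliptic-point and cusp terms are delicate, but it presents no conceptual difficulty once the displayed identity is in hand.
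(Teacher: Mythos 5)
Your proposal is correct and follows essentially the same route as the paper: the displayed identity is just the combination of the convolution formula $A(k,N)=\sum_{d\mid N}B(k,d)$ with Definition~\ref{H def}, and your case analysis (non-squarefree $N$ via Theorem~\ref{main thm}, then squarefree $N$ via the vanishing sets $P_k$ and the products $pq$) mirrors the paper's Lemmas~\ref{nsf lemma}--\ref{sf 2 lemma} and Corollary~\ref{vanishers}. The finite verifications you defer (determining $P_k$ and checking $B(k,pq)>0$) are exactly the ones the paper carries out via the explicit bound $(k-1)(p-1)>26$ and its analogue for $pq$.
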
}

\red{\begin{cor}  \label{prm test cor}
Let $N\ge92$. Then for any positive even integer $k$, we have that $N$ is prime if and only if $H(k,N)=B(k,N)$.
\end{cor}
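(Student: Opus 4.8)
The plan is to read Corollary~\ref{prm test cor} off Theorem~\ref{prm test thm}, whose trichotomy already contains everything needed. I would fix $N \ge 92$ and a positive even integer $k$, and treat the two directions of the biconditional separately.

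For the forward implication, assume $N$ is prime. Then the first bullet of Theorem~\ref{prm test thm} applies via its clause ``when $N$ is prime'', giving $H(k,N) = B(k,N)$ with nothing further to check.

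For the converse, assume $N$ is not prime; I must show $H(k,N) \ne B(k,N)$. The point is that every exceptional value of $N$ mentioned in Theorem~\ref{prm test thm}---the $N=4$ of the second bullet together with $N \in \{6, 9, 10, 14, 15, 21, 26, 35, 39, 65, 91\}$ from the first bullet---is at most $91$, hence strictly smaller than our $N$. So none of those special clauses can be in force for this $N$, and the composite $N$ therefore falls into the ``all other cases'' branch of Theorem~\ref{prm test thm}, which gives $H(k,N) > B(k,N)$; in particular $H(k,N) \ne B(k,N)$. Chaining the two implications yields the claimed equivalence.

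There is essentially no obstacle here: the substance of the corollary is entirely absorbed into the proof of Theorem~\ref{prm test thm}, and the only thing to verify at this stage is the elementary numerical fact that the threshold $92$ strictly exceeds the largest exceptional modulus $91$ appearing in the theorem. (This mirrors exactly how Corollary~\ref{main cor} follows from Theorem~\ref{main thm}, where the threshold is $10$ and the only relevant exception is $N=9$.)
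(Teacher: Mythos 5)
Your proposal is correct and is exactly how the paper intends the corollary to follow: it is an immediate consequence of the trichotomy in Theorem~\ref{prm test thm}, since all exceptional values of $N$ there are at most $91$. The paper gives no separate proof of the corollary, treating it as immediate in just the way you describe.
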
}

Of course, a deterministic polynomial-time primality test already exists~\cite{AKS}, and we have very fast probabilistic primality tests (although, depending upon the speed of the hypothetical oracle that calculates $B(k,N)$, the test resulting from Corollary~\ref{prm test cor} could be even faster). However, if we combine the ideas from the proofs of the previous theorems, we can actually produce a fast method for factoring integers:

\begin{theorem}  \label{3 inputs thm}
Let $N$ be a positive integer. Suppose we know two values $A(k_1,N)$ and $A(k_2,N)$ for distinct positive even integers $k_1$ and $k_2$, and a value $B(k,N)$ for some positive even integer $k$. Then we can calculate the complete factorization of~$N$ in probabilistic polynomial time.
\end{theorem}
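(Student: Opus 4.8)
The plan is to use Theorem~\ref{2 inputs thm} as a black box and then exploit the single extra input $B(k,N)$. Running Theorem~\ref{2 inputs thm} on $A(k_1,N)$ and $A(k_2,N)$ produces, in probabilistic polynomial time, the squarefull part $L=p_1^{e_1}\cdots p_\ell^{e_\ell}$ of~$N$ together with all of its prime factors and exponents, and hence also the squarefree complementary factor $E=N/L$ as an integer. All that remains is to factor the squarefree number~$E$; if $E=1$ this is vacuous, so assume $E>1$.

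For this I would turn to the formula of Proposition~\ref{B prop}, which expresses $B(k,N)$ as a fixed linear combination, with coefficients depending only on~$k$, of finitely many multiplicative functions of~$N$. One of these multiplicative functions is essentially the group index $[\mathrm{SL}_2(\Z):\Gamma_0(N)]$; it takes the value $p-1$ at every prime~$p$, so its value at our squarefree~$E$ is exactly $\phi(E)$. Every other multiplicative function occurring in the formula takes a value lying in $\{0,\pm1,\pm2\}$ at each prime: the one coming from the number of cusps \emph{vanishes} at every prime, while the two coming from the elliptic-point counts evaluate at a prime~$q$ to $\legendre{-4}q-1$ and $\legendre{-3}q-1$, each of which is $0$ or $-2$ (uniformly in~$q$, including $q=2,3$). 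Writing each function-value as $(\text{value at }E)\cdot(\text{value at }L)$ via multiplicativity and $\gcd(E,L)=1$, the values at~$L$ are all computable from the known factorization of~$L$; among the values at~$E$, the cusp-type one is~$0$ (since $E>1$), the M\"obius-type one contributes only when $L=1$ and then equals $\mu(E)=\pm1$, and each remaining ``bounded'' value at~$E$ is a product over $q\mid E$ of entries from $\{0,\pm1,\pm2\}$ and therefore lies in $\{0\}\cup\{\pm2^{j}:0\le j\le\omega(E)\}$.

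Since $2^{\omega(E)}\le E\le N$, each such bounded value at~$E$ ranges over a set of only $O(\log N)$ possibilities, so there are $O((\log N)^2)$ joint possibilities (twice that when $L=1$). For each possibility the identity $B(k,N)=\cdots$ becomes a single \emph{linear} equation whose only unknown is $\phi(E)$, which I would solve for a candidate value~$m$; the correct possibility returns $m=\phi(E)$. Finally, factoring the squarefree integer~$E$ from the true value of $\phi(E)$ is a classical probabilistic reduction: with $m=2^{s}t$ and $t$ odd, a random base~$a$ coprime to~$E$ satisfies $a^m\equiv1$ modulo~$E$ (because $\phi(E)$ is a multiple of every $q-1$ with $q\mid E$), and with probability bounded away from~$0$ the last term different from~$1$ in the sequence $a^{t},a^{2t},\dots,a^{2^{s}t}$ is a square root of~$1$ modulo~$E$ other than~$\pm1$, yielding a proper factor of~$E$ via one $\gcd$; recursing factors~$E$ completely. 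I would run this for each candidate~$m$, accept only a proposed factorization that survives re-multiplication and primality testing, and combine the result with the factorization of~$L$; this produces the complete factorization of~$N$ in probabilistic polynomial time.

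The step I expect to be the main obstacle is the second one: reading off from Proposition~\ref{B prop} that, for squarefree arguments, every multiplicative function in the formula for $B(k,N)$ apart from the index term (and the M\"obius term) is not merely bounded at primes but takes only values in $\{0,\pm1,\pm2\}$ there, so that its value at~$E$ assumes only $O(\log N)$ possibilities and the enumeration stays short; in particular one must verify the vanishing at primes of the cusp-count function and the $\{0,-2\}$ dichotomy for the elliptic-point functions. The other ingredients---Theorem~\ref{2 inputs thm} and the reduction of factoring to the evaluation of~$\phi$---can be invoked without further work.
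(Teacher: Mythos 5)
Your proposal is correct and follows essentially the same route as the paper: reduce to recovering $\phi(E)$ from the formula for $B(k,N)$ once the squarefull part $L$ and its factorization are known, enumerate the $O((\log N)^2)$ possible values of the remaining multiplicative functions, and factor the squarefree $E$ from $\phi(E)$ by the standard probabilistic reduction, verifying candidates at the end. (The paper packages the $\phi(E)$ step as Lemma~\ref{fact N lemma} and enumerates the triple $(\nu_2^{\#}(N),\nu_3^{\#}(N),\mu(N))$ rather than the values at $E$; note also the tiny slip that $\nu_2^{\#}(2)=\nu_3^{\#}(3)=-1$ rather than $0$ or $-2$, though your subsequent appeal to values in $\{0,\pm1,\pm2\}$ already covers this.)
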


\noindent As remarked after Theorem~\ref{2 inputs thm}, it would suffice to know values $A(k_1,M)$, $A(k_2,M)$, and $B(k,M)$ for any multiple $M$ of~$N$.

We establish Theorem~\ref{main thm} in the next section. In Section~\ref{bounds section} we establish upper and lower bounds for square divisors of $N$ based on the difference between $G(k,N)$ and $A(k,N)$. Thereafter, we prove Theorem~\ref{2 inputs thm} in Section~\ref{two values section}, Theorem~\ref{prm test thm} in Section~\ref{prm thm section}, and Theorem~\ref{3 inputs thm} in Section~\ref{fact N section}.

\section{Testing for squarefreeness}  \label{main thm section}

In this section we establish Theorem~\ref{main thm}.
We begin by giving an explicit formula (Proposition~\ref{AkN formula}) for $A(k,N)$; to do so, we must start with several definitions of functions appearing in that formula, as well as in Definition~\ref{GkN def} for $G(k,N)$. After establishing sufficient notation and recording a useful lower bound for $G(k,N)-A(k,N)$, we establish Theorem~\ref{main thm} via Lemmas~\ref{5 and up lemma}--\ref{2 lemma}.

\begin{definition}  \label{-4N -3N def}
$\legendre {-4} N$ and $\legendre {-3} N$ are special values of the Kronecker symbol:
\[
\legendre {-4} N = 
  \begin{cases}
    1, &  \text{if } N \equiv 1 \mod 4,\\
    -1, &  \text{if } N \equiv 3 \mod 4,\\
    0, & \text{if } 2 \mid N;
  \end{cases}
\qquad
\legendre {-3} N = 
  \begin{cases}
    1, &  \text{if } N \equiv 1 \mod 3,\\
    -1, &  \text{if } N \equiv 2 \mod 3,\\
    0, &  \text{if } 3\mid N.
  \end{cases}
\]
\end{definition} 

\begin{definition} \label{c2 c3 def}
The functions $c_2$ and $c_3$ are defined as follows:
\begin{align*}
c_2(k) &= \frac{1}{4}+\bigg\lfloor{\frac{k}{4}}\bigg\rfloor-\frac{k}{4} = 
  \begin{cases}
    {1}/{4}, &  \text{if } k \equiv 0 \mod 4,\\
    -{1}/{4}, &  \text{if } k \equiv 2 \mod 4 ;
  \end{cases} \\
c_3(k) &= \frac{1}{3}+\bigg\lfloor{\frac{k}{3}}\bigg\rfloor-\frac{k}{3} =
  \begin{cases}
    {1}/{3}, &  \text{if } k \equiv 0 \mod 3,\\
    0,           & \text{if }  k \equiv 1 \mod 3,\\
    -{1}/{3}, &  \text{if } k \equiv 2 \mod 3 .
  \end{cases}
\end{align*}
(We do not list the values of $c_2(k)$ when $k$ is odd since we consider only even integers $k$ in this paper.)
We also define
\[
\delta_1(m) =   \begin{cases}
    1, &  \text{if } m=1,\\
    0, &  \text{if } m\ne1
  \end{cases}
\quad\text{and}\quad
\delta_2(m) =   \begin{cases}
    1, &  \text{if } m=2,\\
    0, &  \text{if } m\ne2.
  \end{cases}
\]
\end{definition} 

\begin{definition} \label{mult fns def}
The multiplicative functions $s_0^{*}$ and $\nu_{\infty}^{*}$ are defined as follows:
\[s_0^{*}(N) =
\prod_{\substack{p^{e}\parallel N \\ e\ge2}} \bigg(1-\frac{1}{p^2}\bigg) ;
\qquad
\nu_{\infty}^{*}(N) =
\prod_{\substack{p^{e}\parallel N \\ e\ge2}} (p-1)p^{\floor{{e}/2-1}}.
\]
In particular, $s_0^{*}(N)=\nu_{\infty}^{*}(N)=1$ when $N$ is squarefree. Note that if $M\mid N$, then $s_0^*(M) \ge s_0^*(N)$ and $\nu_\infty^*(M) \le \nu_\infty^*(N)$; in particular, $s_0^*(N) \le 1 \le \nu_\infty^*(N)$ for all positive integers~$N$. We also remark that if $D$ is the largest integer such that $D^2\mid N$, then $\nu_{\infty}^{*}(N) = \phi(D)$, where $\phi$ is the Euler phi-function.
\end{definition} 

\begin{definition}  \label{nu2 nu3 def}
The multiplicative functions $\nu_2^{*}$ and $\nu_3^{*}$ are defined in terms of the Kronecker symbol (see Definition~\ref{-4N -3N def}) as follows:
\begin{align*}
\nu_2^*(N) &=
\begin{cases}
     \legendre {-4} {N},  & \text{if $N$ is squarefree}, \\
     -\legendre {-4} {N/4},  & \text{if $4\mid N$ and ${N}/{4}$ is squarefree}, \\
    0, & \text{otherwise};
\end{cases} \\
\nu_3^*(N) &=
\begin{cases}
     \legendre {-3} {N},  & \text{if $N$ is squarefree}, \\
     -\legendre {-3} {N/9},  & \text{if $9\mid N$ and ${N}/{9}$ is squarefree}, \\
    0, & \text{otherwise}.
\end{cases}
\end{align*}
\end{definition} 

The following proposition was derived by the second author~\cite[Theorem 4]{M}. (In that paper, the function $A(k,N)$ was denoted by $g_{\red 0}^*(k,N)$, but that notation would be more confusing in the present context. It can be quickly verified that the formulas given in Definitions~\ref{mult fns def} and~\ref{nu2 nu3 def} are equivalent to those given in~\cite[Definition 4$'$]{M}.)

\begin{prop} \label{AkN formula}
For any integer $N\ge2$ and any even integer $k\ge2$,
\[
A(k,N)=\frac{k-1}{12}Ns_0^{*}(N)-\frac{1}{2}\nu_{\infty}^{*}(N)+c_2(k)\nu_2^{*}(N)+c_3(k)\nu_3^{*}(N).
\]
\end{prop}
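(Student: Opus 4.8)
The plan is to reduce the formula to the classical dimension formula for cusp forms by M\"obius inversion, starting from the description of $A(k,N)$ recorded in the Definition above: $A(k,N)$ is the number of weight-$k$ Hecke newforms of level dividing~$N$, so $A(k,N)=\sum_{d\mid N}\dim S_k^{\mathrm{new}}(\Gamma_0(d))$. The Atkin--Lehner--Li oldform/newform decomposition gives, for every integer $M\ge1$,
\[
\dim S_k(\Gamma_0(M))=\sum_{d\mid M}\sigma_0(M/d)\,\dim S_k^{\mathrm{new}}(\Gamma_0(d)),
\]
where $\sigma_0$ is the divisor-counting function. Reading these as identities of Dirichlet convolutions in the variable $M$ (with $k$ fixed) and using $\sigma_0=\mathbf1*\mathbf1$, one inverts to get $\dim S_k^{\mathrm{new}}(\Gamma_0(\cdot))=\mu*\mu*\dim S_k(\Gamma_0(\cdot))$, and then, convolving once more with~$\mathbf1$,
\[
A(k,N)=\sum_{d\mid N}\mu(d)\,\dim S_k(\Gamma_0(N/d)).
\]
From here everything is a finite computation once a closed form for $\dim S_k(\Gamma_0(M))$ is substituted.

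For that closed form I would use the standard dimension formula (via Riemann--Roch on $X_0(M)$, as in Shimura or Diamond--Shurman): for even $k\ge2$ and all $M\ge1$,
\[
\dim S_k(\Gamma_0(M))=\frac{k-1}{12}\psi(M)-\frac12\nu_\infty(M)+c_2(k)\nu_2(M)+c_3(k)\nu_3(M)+\delta_2(k),
\]
where $\psi(M)=[\mathrm{SL}_2(\Z):\Gamma_0(M)]=M\prod_{p\mid M}(1+1/p)$, where $\nu_\infty(M)$, $\nu_2(M)$, $\nu_3(M)$ are the numbers of cusps and of elliptic points of orders~$2$ and~$3$ on~$X_0(M)$, and where the coefficients $\lfloor k/4\rfloor-(k-1)/4$ and $\lfloor k/3\rfloor-(k-1)/3$ of $\nu_2$ and $\nu_3$ agree with $c_2(k)$ and $c_3(k)$ from Definition~\ref{c2 c3 def}. (This unified form is obtained by expanding the genus formula $g=1+\frac{\psi}{12}-\frac{\nu_2}{4}-\frac{\nu_3}{3}-\frac{\nu_\infty}{2}$ inside the usual $k\ge4$ formula $(k-1)(g-1)+\lfloor k/4\rfloor\nu_2+\lfloor k/3\rfloor\nu_3+(\frac k2-1)\nu_\infty$, and by checking the weight-$2$ case $\dim S_2=g$ separately, where the constant $\delta_2(k)$ supplies the extra~$+1$.) The operator $f\mapsto\sum_{d\mid N}\mu(d)f(N/d)$ is linear in~$f$, so applying it to this formula reduces the proof to computing $\sum_{d\mid N}\mu(d)f(N/d)$ for each of the five multiplicative functions $f\in\{\psi,\nu_\infty,\nu_2,\nu_3,\mathbf1\}$. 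The constant function contributes $\delta_2(k)$ times the indicator of $N=1$, which vanishes because $N\ge2$---this is exactly why the hypothesis $N\ge2$ is present.

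Each of the four remaining convolutions is multiplicative, so it suffices to evaluate $f(p^e)-f(p^{e-1})$ at every prime power. For $\psi$ this is $p^e$ when $e\le1$ and $p^e(1-p^{-2})$ when $e\ge2$, which reassembles into $Ns_0^*(N)$. For the cusp count, the standard values $\nu_\infty(p^{2m})=p^m+p^{m-1}$ for $m\ge1$ and $\nu_\infty(p^{2m+1})=2p^m$ for $m\ge0$ give $\nu_\infty(p^e)-\nu_\infty(p^{e-1})=1$ for $e\le1$ and $(p-1)p^{\floor{e/2-1}}$ for $e\ge2$, reassembling into $\nu_\infty^*(N)$ (consistently with the remark in Definition~\ref{mult fns def} that $\nu_\infty^*(N)=\phi(D)$ where $D^2$ is the largest square dividing~$N$). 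For $\nu_2$ and $\nu_3$ I would invoke Hensel's lemma: since $x^2\equiv-1\mod{p^e}$ lifts uniquely for $p$ odd, $\nu_2(p^e)=1+\legendre{-4}{p}$ is the same for all $e\ge1$ when $p$ is odd, while $\nu_2(2)=1$ and $\nu_2(2^e)=0$ for $e\ge2$; likewise $x^2+x+1\equiv0\mod{p^e}$ lifts uniquely for $p\ne3$, so $\nu_3(p^e)=1+\legendre{-3}{p}$ is the same for all $e\ge1$ when $p\ne3$, while $\nu_3(3)=1$ and $\nu_3(3^e)=0$ for $e\ge2$. Subtracting consecutive values then reproduces precisely the prime-power values of $\nu_2^*$ and $\nu_3^*$ prescribed in Definition~\ref{nu2 nu3 def}. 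Substituting the four identities into the displayed expression for $A(k,N)$ yields Proposition~\ref{AkN formula}. I expect the main point requiring care to be this last matching step: one must check the small-exponent cases and the primes $p\in\{2,3\}$ against the somewhat delicate definitions of $\nu_2^*$, $\nu_3^*$, $\nu_\infty^*$, and confirm that $\nu_2^*$ and $\nu_3^*$ as written really are the multiplicative functions with those prime-power values; once the prime-power values agree, multiplicativity finishes the argument. This reproduces the derivation of~\cite[Theorem~4]{M}.
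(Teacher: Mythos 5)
Your argument is correct, but note that the paper contains no proof of Proposition~\ref{AkN formula} at all: the statement is simply quoted from \cite[Theorem~4]{M}, with the remark that Definitions~\ref{mult fns def} and~\ref{nu2 nu3 def} agree with \cite[Definition~4$'$]{M}. So what you have written is a self-contained reconstruction of the cited derivation (and it is essentially the route taken in \cite{M}: express the classical dimension as a divisor convolution and invert). Each step checks out. The starting identity $A(k,N)=\sum_{d\mid N}\dim S_k^{\mathrm{new}}(\Gamma_0(d))$ is the paper's own equation~\eqref{ABrel}; the Atkin--Lehner--Li multiplicity $\sigma_0(M/d)$ together with the double M\"obius inversion correctly yields $A(k,\cdot)=\mu * \dim S_k(\Gamma_0(\cdot))$; your unified dimension formula with the $\delta_2(k)$ term is valid for every even $k\ge2$ and every level (the weight-$2$ case reducing to $\dim S_2=g$, with $\delta_2$ supplying the $+1$); and the prime-power evaluations of the four convolutions do reassemble into $Ns_0^*(N)$, $\nu_\infty^*(N)$, $\nu_2^*(N)$, $\nu_3^*(N)$, including the sign changes at $2^2$ and $3^2$ that produce the $-\legendre{-4}{N/4}$ and $-\legendre{-3}{N/9}$ cases of Definition~\ref{nu2 nu3 def}, while the leftover term $\delta_2(k)\,(\mu*\mathbf 1)(N)$ vanishes precisely because $N\ge2$. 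The only inputs taken on faith are standard: the newform decomposition and the genus/cusp/elliptic-point counts for $X_0(M)$ (your Hensel-lemma justification of $\nu_2(p^e)$ and $\nu_3(p^e)$ is fine, these counts being the numbers of solutions of $x^2+1\equiv0$ and $x^2+x+1\equiv0$ modulo the level). In short, your approach buys self-containedness at the cost of length, whereas the paper simply outsources the formula to~\cite{M}.
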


We now characterize the values of $k$ and $N$ for which the actual number $A(k,N)$ of \red{non-isomorphic} automorphic \red{representations} equals the simpler function $G(k,N)$ from Gekeler's theorem.

\begin{definition}  \label{Delta def}
Define $\Delta(k,N)=G(k,N)-A(k,N)$. From Definition~\ref{GkN def} and Proposition~\ref{AkN formula}, we see that for any integer $N\ge2$ and any even integer $k\ge2$,
\begin{multline} \label{Deltakn formula}
\Delta(k,N) = \frac{k-1}{12} N \big( 1 - s_0^*(N) \big) + \frac12\big( \nu_\infty^*(N) - 1 \big) \\
+ c_2(k) \bigg( \legendre{-4}N - \nu_2^*(N) \bigg) + c_3(k) \bigg( \legendre{-3}N - \nu_3^*(N) \bigg).
\end{multline}
\end{definition}

Our intuition should be that square divisors of $N$ cause the first two terms on the right-hand side of equation~\eqref{Deltakn formula} to be significantly positive. We proceed to make this strategy precise.

\begin{lemma}  \label{Deltakn ineq lemma}
For any integer $N\ge2$ and any even integer $k\ge2$,
\begin{equation}  \label{1312 ineq}
\Delta(k,N) \ge \frac{k-1}{12} N \big( 1 - s_0^*(N) \big) + \frac12\nu_\infty^*(N) - \frac{13}{12}.
\end{equation}
\end{lemma}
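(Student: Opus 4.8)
The plan is to start from the exact formula \eqref{Deltakn formula} for $\Delta(k,N)$ and bound the last two terms from below. The first two terms on the right-hand side of \eqref{Deltakn formula}, namely $\frac{k-1}{12}N(1-s_0^*(N))$ and $\frac12(\nu_\infty^*(N)-1)$, already appear (the second split as $\frac12\nu_\infty^*(N)-\frac12$) in the desired inequality \eqref{1312 ineq}, so it suffices to show that
\[
c_2(k)\bigg(\legendre{-4}N-\nu_2^*(N)\bigg)+c_3(k)\bigg(\legendre{-3}N-\nu_3^*(N)\bigg)\ \ge\ -\frac{13}{12}+\frac12\ =\ -\frac{7}{12}.
\]
Since $|c_2(k)|\le\frac14$ and $|c_3(k)|\le\frac13$ for all even $k$ (immediate from Definition~\ref{c2 c3 def}), and since $\big|\legendre{-4}N\big|\le1$, $\big|\nu_2^*(N)\big|\le1$, $\big|\legendre{-3}N\big|\le1$, $\big|\nu_3^*(N)\big|\le1$ (the bounds on $\nu_2^*,\nu_3^*$ being clear from Definition~\ref{nu2 nu3 def}, where each nonzero value is $\pm1$), the triangle inequality gives
\[
\bigg|c_2(k)\bigg(\legendre{-4}N-\nu_2^*(N)\bigg)\bigg|\le\frac14\cdot2=\frac12,\qquad
\bigg|c_3(k)\bigg(\legendre{-3}N-\nu_3^*(N)\bigg)\bigg|\le\frac13\cdot2=\frac23,
\]
so the combined quantity is at least $-\frac12-\frac23=-\frac{7}{6}$. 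Unfortunately $-\frac76<-\frac7{12}$, so the crude triangle-inequality bound is not quite strong enough, and a small additional observation is needed.

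The extra input is that the two "discrepancy" factors $\legendre{-4}N-\nu_2^*(N)$ and $\legendre{-3}N-\nu_3^*(N)$ cannot both be large in absolute value, and more importantly each is itself constrained. First note that whenever $N$ is squarefree both factors vanish, so \eqref{1312 ineq} holds trivially (indeed with room to spare); hence we may assume $N$ is not squarefree, i.e.\ some $p^2\mid N$. Next, examine $\legendre{-4}N-\nu_2^*(N)$: if $N$ is squarefree it is $0$; if $4\mid N$ and $N/4$ is squarefree then $\legendre{-4}N=0$ and $\nu_2^*(N)=-\legendre{-4}{N/4}=\pm1$, so the factor is $0-(\pm1)=\mp1$, of absolute value $1$; in all remaining cases $\nu_2^*(N)=0$ and the factor is just $\legendre{-4}N\in\{0,\pm1\}$, again of absolute value at most $1$. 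Thus $\big|\legendre{-4}N-\nu_2^*(N)\big|\le1$, not merely $\le2$, and similarly $\big|\legendre{-3}N-\nu_3^*(N)\big|\le1$. Therefore
\[
\bigg|c_2(k)\bigg(\legendre{-4}N-\nu_2^*(N)\bigg)+c_3(k)\bigg(\legendre{-3}N-\nu_3^*(N)\bigg)\bigg|\ \le\ \frac14+\frac13\ =\ \frac{7}{12},
\]
which gives exactly the needed lower bound $-\frac7{12}$, and adding back $\frac12\nu_\infty^*(N)-\frac12$ turns $\frac12(\nu_\infty^*(N)-1)-\frac7{12}$ into the right-hand side of \eqref{1312 ineq} with the $-\frac{13}{12}$ arising as $-\frac12-\frac7{12}$.

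The main obstacle is thus not any deep estimate but the realization that the naive triangle-inequality bound on the $c_2,c_3$ terms loses a factor of two and must be sharpened by inspecting Definition~\ref{nu2 nu3 def} case by case to see that $\big|\legendre{-4}N-\nu_2^*(N)\big|$ and $\big|\legendre{-3}N-\nu_3^*(N)\big|$ never exceed $1$; once that bookkeeping is done the inequality \eqref{1312 ineq} follows immediately by combining it with the trivial fact $\nu_\infty^*(N)\ge1$ is \emph{not} needed here since the $\nu_\infty^*$ term is carried through exactly. I would present the case analysis for $\nu_2^*$ and $\nu_3^*$ explicitly (it is three lines each), note the squarefree case separately for clarity, and then assemble the pieces.
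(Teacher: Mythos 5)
Your proof is correct and follows essentially the same route as the paper: the paper's proof likewise records the three-case evaluation of $\legendre{-4}N-\nu_2^*(N)$ and $\legendre{-3}N-\nu_3^*(N)$ (showing each has absolute value at most $1$), deduces the bounds $\frac14$ and $\frac13$ on the two $c_2,c_3$ terms, and assembles $-\frac12-\frac7{12}=-\frac{13}{12}$. The only cosmetic difference is that your write-up also narrates the failed naive triangle-inequality attempt before giving the correct case analysis.
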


\begin{proof}
We easily verify that
\begin{equation*}
\begin{split}
\legendre{-4}N - \nu_2^*(N) &= \begin{cases}
     0,  & \text{if $N$ is squarefree}, \\
     \legendre {-4} {N/4},  & \text{if $4\mid N$ and ${N}/{4}$ is squarefree}, \\
    \legendre{-4}N, & \text{otherwise};
\end{cases} \\
\legendre{-3}N - \nu_3^*(N) &= \begin{cases}
     0,  & \text{if $N$ is squarefree}, \\
     \legendre {-3} {N/9},  & \text{if $9\mid N$ and ${N}/{9}$ is squarefree}, \\
    \legendre{-3}N, & \text{otherwise}.
\end{cases}
\end{split}
\end{equation*}
In particular,
\[
\bigg| c_2(k) \bigg( \legendre{-4}N - \nu_2^*(N) \bigg) \bigg| \le \frac14 \quad\text{and}\quad \bigg| c_3(k) \bigg( \legendre{-3}N - \nu_3^*(N) \bigg) \bigg| \le \frac13.
\]
The inequality~\eqref{1312 ineq} now follows immediately from the formula~\eqref{Deltakn formula}.
\end{proof}

In the current notation, Theorem~\ref{main thm} characterizes the sign of $\Delta(k,N)$ in terms of $k$ and~$N$. Gekeler's theorem already tells us that $\Delta(k,N)=0$ when $N\ge2$ is squarefree; this fact can be quickly verified by noting that all four summands on the right-hand side of equation~\eqref{Deltakn formula} vanish when $N$ is squarefree, thus reproving Gekeler's theorem as a corollary of Proposition~\ref{AkN formula}.

At this point, then, to establish Theorem~\ref{main thm}, it remains only to prove that if $N$ is not squarefree then $\Delta(k,N)>0$, except for the two exceptions $\Delta(2,9)=0$ and $\Delta(2,4)=-\frac12$. We accomplish this via the next three lemmas, distinguished by the size of the prime whose square divides~$N$.

\begin{lemma}  \label{5 and up lemma}
Let $N$ be a positive integer and $k\ge2$ an even integer.
If there exists a prime $p\ge5$ such that $p^2\mid N$, then $\Delta(k,N)>0$.
\end{lemma}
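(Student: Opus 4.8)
The plan is to use the lower bound from Lemma~\ref{Deltakn ineq lemma}, namely
\[
\Delta(k,N) \ge \frac{k-1}{12} N \big( 1 - s_0^*(N) \big) + \frac12\nu_\infty^*(N) - \frac{13}{12},
\]
and show that when $p^2 \mid N$ for some prime $p \ge 5$, the first two terms already overwhelm $\frac{13}{12}$. Since $k \ge 2$, we have $\frac{k-1}{12} \ge \frac{1}{12}$, so it suffices to show $\frac{1}{12} N \big(1 - s_0^*(N)\big) + \frac12 \nu_\infty^*(N) \ge \frac{13}{12}$, i.e., $N\big(1 - s_0^*(N)\big) + 6\nu_\infty^*(N) \ge 13$.

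First I would bound $s_0^*(N)$ away from $1$: if $p^e \parallel N$ with $e \ge 2$, then $s_0^*(N) \le 1 - \frac{1}{p^2}$ since $s_0^*$ is multiplicative with all factors at most $1$ (using the monotonicity remark in Definition~\ref{mult fns def}, applied to the divisor $p^e$ of $N$). Hence $1 - s_0^*(N) \ge \frac{1}{p^2}$, so $N(1 - s_0^*(N)) \ge \frac{N}{p^2}$. Since $p^2 \mid N$ we have $N \ge p^2 \ge 25$, giving $N(1 - s_0^*(N)) \ge 1$. Meanwhile $\nu_\infty^*(N) \ge \nu_\infty^*(p^2) = (p-1)p^{\floor{0}} = p - 1 \ge 4$ (again by monotonicity of $\nu_\infty^*$ under divisibility, since $p^2 \mid N$). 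Therefore the right-hand side of the Lemma~\ref{Deltakn ineq lemma} bound is at least $\frac{1}{12}\cdot 1 + \frac12 \cdot 4 - \frac{13}{12} = \frac{1}{12} + 2 - \frac{13}{12} = 1 > 0$.

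Actually, to keep the writeup clean I would prefer to track things slightly more carefully so the final estimate is manifestly positive with room to spare: from $N \ge p^2$ and $1 - s_0^*(N) \ge 1/p^2$ we get $\frac{k-1}{12}N(1-s_0^*(N)) \ge \frac{k-1}{12} \ge \frac{1}{12}$, and from $\nu_\infty^*(N) \ge p-1 \ge 4$ we get $\frac12 \nu_\infty^*(N) \ge 2$, so $\Delta(k,N) \ge \frac{1}{12} + 2 - \frac{13}{12} = 1 > 0$. This completes the proof.

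There is no real obstacle here; the only thing to be careful about is invoking the monotonicity properties of $s_0^*$ and $\nu_\infty^*$ correctly (both are stated in Definition~\ref{mult fns def}: if $M \mid N$ then $s_0^*(M) \ge s_0^*(N)$ and $\nu_\infty^*(M) \le \nu_\infty^*(N)$), and making sure the $k=2$ case is not an exception — which it is not, since even at $k=2$ the $\frac{k-1}{12}$ coefficient contributes a positive amount and the $\nu_\infty^*$ term alone (contributing at least $2$) already beats $\frac{13}{12}$.
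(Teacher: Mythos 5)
Your proof is correct and follows essentially the same route as the paper: both invoke Lemma~\ref{Deltakn ineq lemma} and conclude from $\nu_\infty^*(N) \ge \nu_\infty^*(p^2) = p-1 \ge 4$, so that $\frac12\nu_\infty^*(N) \ge 2 > \frac{13}{12}$. The only (immaterial) difference is that you bound the first term below by $\frac{1}{12}$ via $N(1-s_0^*(N)) \ge N/p^2 \ge 1$, whereas the paper simply discards it as nonnegative using $s_0^*(N) \le 1$.
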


\begin{proof}
Since $s_0^*(N) \le 1$ for all positive integers $N$, we may simplify the inequality~\eqref{1312 ineq} to
\[
\Delta(k,N) \ge \frac12\nu_\infty^*(N) - \frac{13}{12}.
\]
But the fact that $p^2 \mid N$ implies that $\nu_\infty^*(p^2) \le \nu_\infty^*(N)$, and therefore
\[
\Delta(k,N) \ge \frac12 \nu_\infty^*(p^2) - \frac{13}{12} = \frac{p-1}2 - \frac{13}{12},
\]
which is positive thanks to the assumption $p\ge5$.
\end{proof}

\begin{lemma}  \label{3 lemma}
Let $N$ be a positive integer and $k\ge2$ an even integer.
If $9\mid N$, then $\Delta(k,N)>0$ unless $k=2$ and $N=9$.
\end{lemma}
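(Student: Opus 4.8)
The plan is to leverage Lemma~\ref{Deltakn ineq lemma}, which already isolates the two ``square-divisor'' contributions to $\Delta(k,N)$, together with the observation that the hypothesis $9\mid N$ forces \emph{both} of those contributions to be sizeable: it makes $s_0^*(N)$ bounded away from~$1$ and simultaneously makes $\nu_\infty^*(N)$ at least~$2$. Pushing both facts through the inequality~\eqref{1312 ineq} should leave only one genuinely small case to check by hand.

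Concretely, first I would record that $9\mid N$ implies $s_0^*(N)\le 1-\tfrac19=\tfrac89$ (the factor $1-1/9$ appears in the product defining $s_0^*(N)$, and all other factors are at most~$1$), hence $1-s_0^*(N)\ge\tfrac19$; and that $\nu_\infty^*(N)\ge\nu_\infty^*(9)=\phi(3)=2$ by the monotonicity remark in Definition~\ref{mult fns def}. Substituting these two bounds into~\eqref{1312 ineq} gives
\[
\Delta(k,N)\ \ge\ \frac{k-1}{12}\cdot\frac{N}{9}+\frac12\cdot 2-\frac{13}{12}\ =\ \frac{(k-1)N}{108}-\frac1{12},
\]
which is strictly positive exactly when $(k-1)N>9$.

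It then remains only to examine the pairs $(k,N)$ with $9\mid N$ and $(k-1)N\le 9$. Since $N\ge 9$, the inequality $(k-1)N>9$ holds automatically whenever $k\ge4$; and when $k=2$ it reads $N>9$, which holds for every multiple of~$9$ except $N=9$ itself. Thus the only case not covered is $k=2$, $N=9$, which is precisely the exception in the statement; for completeness I would note that a direct substitution into Definition~\ref{GkN def} and Proposition~\ref{AkN formula} gives $G(2,9)=A(2,9)=0$, so $\Delta(2,9)=0$ and this case genuinely must be excluded.

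There is no real obstacle here: the only point requiring a little care is to use the sharp bound $\nu_\infty^*(N)\ge2$ rather than the trivial $\nu_\infty^*(N)\ge1$, since the weaker bound would leave the finitely many cases $N\in\{9,18,\dots,63\}$ (for $k=2$) to dispose of individually, whereas the sharp bound collapses the leftover work to the single value $N=9$.
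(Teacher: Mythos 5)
Your proof is correct and follows essentially the same route as the paper's: both apply Lemma~\ref{Deltakn ineq lemma} with the bounds $s_0^*(N)\le s_0^*(9)=\tfrac89$ and $\nu_\infty^*(N)\ge\nu_\infty^*(9)=2$ to reach $\Delta(k,N)\ge\frac{(k-1)N}{108}-\frac1{12}$, and then observe that only $(k,N)=(2,9)$ survives, where $\Delta(2,9)=0$. No issues.
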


\begin{proof}
The fact that $9\mid N$ implies that $s_0^*(9) \ge s_0^*(N)$ and $\nu_\infty^*(9) \le \nu_\infty^*(N)$, and hence Lemma~\ref{Deltakn ineq lemma} implies
\begin{align*}
\Delta(k,N) &\ge \frac{k-1}{12} N \big( 1 - s_0^*(9) \big) + \frac12\nu_\infty^*(9) - \frac{13}{12} = \frac{(k-1)N}{108} - \frac1{12}.
\end{align*}
The right-hand side is automatically positive when $(k-1)N>9$; given the assumption $9\mid N$, the only case left to check (since $k$ is a positive even integer) is $\Delta(2,9)=0$.
\end{proof}

\begin{lemma}  \label{2 lemma}
Let $N$ be a positive integer and $k\ge2$ an even integer.
If $4\mid N$, then $\Delta(k,N)>0$ unless $k=2$ and $N=4$.
\end{lemma}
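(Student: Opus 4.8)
The plan is to mirror the proofs of Lemmas~\ref{5 and up lemma} and~\ref{3 lemma}, but the bound coming from $\nu_\infty^*$ alone is too weak when the only square divisor is a power of~$2$ (since $\nu_\infty^*(4)=1$), so I will need to extract genuine positivity from the $s_0^*$ term as well. Write $N=2^a M$ with $a\ge2$ and $M$ odd. Since $4\mid N$ we have $s_0^*(4)\ge s_0^*(N)$ and $\nu_\infty^*(4)\le\nu_\infty^*(N)$, and $s_0^*(4)=1-\tfrac14=\tfrac34$, $\nu_\infty^*(4)=1$, so Lemma~\ref{Deltakn ineq lemma} gives
\[
\Delta(k,N)\ge \frac{k-1}{12}N\Big(1-\tfrac34\Big)+\frac12-\frac{13}{12}=\frac{(k-1)N}{48}-\frac{7}{12}.
\]
The right-hand side is positive as soon as $(k-1)N>28$. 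Given $4\mid N$, the residual cases are $N\in\{4,8,12,16,20,24,28\}$ with $k=2$, together with $N\in\{4,8,12\}$ with $k=4$ (here $(k-1)N=3N$, so $N\le 9$ forces $N\in\{4,8\}$; but I should double-check $N=12$, $k=4$ gives $3\cdot12=36>28$, so that one is already handled) and $N=4$ with $k=6$ (since $5\cdot4=20\le28$). Actually it is cleanest to simply say: the bound forces $(k-1)N\le 28$ together with $4\mid N$, leaving a short explicit list of pairs $(k,N)$.

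For each remaining pair I would fall back on the exact formula~\eqref{Deltakn formula} (or equivalently compute $G(k,N)$ and $A(k,N)$ directly from Definition~\ref{GkN def} and Proposition~\ref{AkN formula}), since for these finitely many small values everything is a short arithmetic check. The claim to verify is that $\Delta(k,N)>0$ for all of them except $(k,N)=(2,4)$, where one computes $\Delta(2,4)=-\tfrac12$ (so $G<A$), matching the exceptional statement. For instance, when $N=4$ and $k=2$, we have $s_0^*(4)=\tfrac34$, $\nu_\infty^*(4)=1$, $\nu_2^*(4)=-\legendre{-4}1=-1$, $\nu_3^*(4)=\legendre{-3}4=1$, so $A(2,4)=\tfrac1{12}\cdot4\cdot\tfrac34-\tfrac12+(-\tfrac14)(-1)+0=\tfrac14-\tfrac12+\tfrac14=0$, while $G(2,4)=\tfrac1{12}\cdot4-\tfrac12+(-\tfrac14)\legendre{-4}4+0=\tfrac13-\tfrac12=-\tfrac16$; wait, that would give $G>A$, so I must recompute carefully—this is exactly the kind of sign bookkeeping where the paper's stated value $\Delta(2,4)=-\tfrac12$ is the target, and I would trust the formula and recheck $c_2(2)=-\tfrac14$ and the Kronecker values rather than my scratch arithmetic. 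The point is that it reduces to a finite, mechanical verification.

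The main obstacle is therefore not conceptual but bookkeeping: making sure the residual list of pairs $(k,N)$ with $4\mid N$ and $(k-1)N\le 28$ is complete, and then carrying out the exact evaluation of $\Delta(k,N)$ correctly for each, being careful with the piecewise definitions of $\nu_2^*$, $\nu_3^*$ (note that $\nu_2^*(N)$ can be nonzero via the ``$4\mid N$ and $N/4$ squarefree'' branch, e.g.\ $N=4,8,12,20$, whereas it vanishes for $N=16,24$), and the values of $c_2(k),c_3(k)$ for $k=2,4,6$. A secondary point worth a sentence in the writeup is why the list doesn't need to include odd multiples of higher prime squares: those are already covered by Lemmas~\ref{5 and up lemma} and~\ref{3 lemma}, so within this lemma we only care about whether the power of $2$ dividing $N$ is large enough, and indeed if $8\mid N$ then $\nu_\infty^*(8)=\phi(2)=1$ still, but $s_0^*(8)=\tfrac34$ as well, so the same bound applies with no improvement—meaning $N=8$ with $k=2$ genuinely needs the direct check (one finds $\Delta(2,8)>0$). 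Once all the small pairs are confirmed, the lemma—and with it the last remaining piece of Theorem~\ref{main thm}—follows.
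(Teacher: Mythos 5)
Your proposal takes exactly the paper's route: apply Lemma~\ref{Deltakn ineq lemma} with $s_0^*(4)=\tfrac34$ and $\nu_\infty^*(4)=1$ to get $\Delta(k,N)\ge\frac{(k-1)N}{48}-\frac7{12}$, reduce to the pairs with $(k-1)N\le28$ and $4\mid N$, and check those directly. Two bookkeeping points need fixing before the finite check is actually complete. First, your explicit enumeration omits $(k,N)=(8,4)$, for which $(k-1)N=28$ is not $>28$; the full residual list is $N\in\{4,8,12,16,20,24,28\}$ with $k=2$, plus $(4,4)$, $(4,8)$, $(6,4)$, and $(8,4)$, and in all of these except $(2,4)$ one finds $\Delta(k,N)=\tfrac12>0$. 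Second, your scratch evaluation of $\Delta(2,4)$ goes wrong in two places: $\nu_3^*(4)=0$ (not $\legendre{-3}{4}$, since $4$ is not squarefree), and in $G(2,4)$ the term $c_3(2)\legendre{-3}{4}$ is $-\tfrac13\cdot1$ rather than $0$ because $4\equiv1\mod 3$; with these corrected, $G(2,4)=\tfrac13-\tfrac12-\tfrac13=-\tfrac12$ and $A(2,4)=0$, giving $\Delta(2,4)=-\tfrac12<0$ as the exceptional case requires. Your remark that odd prime-square divisors are handled by the other two lemmas is fine but not needed: the lemma as stated only assumes $4\mid N$, and the argument above covers that hypothesis on its own.
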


\begin{proof}
The fact that $4\mid N$ implies that $s_0^*(4) \ge s_0^*(N)$ and $\nu_\infty^*(4) \le \nu_\infty^*(N)$, and hence Lemma~\ref{Deltakn ineq lemma} implies
\begin{align*}
\Delta(k,N) &\ge \frac{k-1}{12} N \big( 1 - s_0^*(4) \big) + \frac12\nu_\infty^*(4) - \frac{13}{12} = \frac{(k-1)N}{48} - \frac7{12}.
\end{align*}
The right-hand side is automatically positive when $(k-1)N>28$; given the assumption $4\mid N$, the only cases left to check (since $k$ is a positive even integer) are $\Delta(2,4)=-\frac12$ and $\Delta(2,8)=\Delta(2,12)=\Delta(2,16)=\Delta(2,20)=\Delta(2,24)=\Delta(2,28)=\Delta(4,4)=\Delta(4,8)=\Delta(6,4)=\Delta(8,4)=\frac12$.
\end{proof}

The proof of Theorem~\ref{main thm} is now complete.

\section{Bounds for the size of square divisors}  \label{bounds section}

Theorem~\ref{main thm} tells us that a single value $A(k,N)$ is enough to determine whether the number $N$ is squarefree or not. In this section, we show that even more detailed information can be obtained from $A(k,N)$: we can place upper and lower bounds upon the possible square factors of~$N$. We provide explicit upper and lower bounds for such divisors in Proposition~\ref{trig bounds prop}, and asymptotic versions of those bounds in Proposition~\ref{asymptotic d bounds prop}. The latter statement, in particular, makes it clear that these bounds are best when $A(k,N)$ is close to $G(k,N)$; in the course of the proof we will see that their difference cannot be significantly smaller than $\sqrt[3]N$ when $N$ is large (equation~\eqref{T AMGM} gives a precise inequality of this type). We end this section with an illustration of these bounds for the simplest example of a non-squarefree number~$N$.

\begin{definition}  \label{T0 T def}
For the rest of this section, given a positive integer $N$ and a positive even integer $k$, we will use the notation
\begin{align*}
T_0 = 12 \bigg( \Delta(k,N) + \frac{1}{2}-c_2(k)\legendre {-4} N-c_3(k)\legendre{-3} N \bigg).
\end{align*}
We see that $T_0$ is essentially a scaled version of $\Delta(k,N)$: it is easy to check that $|T_0-12\Delta(k,N)| \le 13$, and $T_0$ can be instantly computed from $\Delta(k,N)$ without requiring the factorization of~$N$. We also define
\[
T = \begin{cases}
T_0 +3, &\text{if } 3\mid k, \\
T_0 +7, &\text{if } 3\nmid k.
\end{cases}
\]
In particular, $T$ can be computed from a given value of $A(k,N)$ in polynomial time (since $G(k,N)$ is trivial to calculate).
\end{definition}

\begin{lemma}  \label{T ineq lemma}
For any positive integer $N$ and any positive even integer $k$,
\[
T \ge (k-1)N \big( 1 - s_0^{*}(N) \big) + 6\nu_{\infty}^{*}(N).
\]
\end{lemma}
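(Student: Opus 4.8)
The plan is to unwind the definition of $T$ in terms of $T_0$, and then the definition of $T_0$ in terms of $\Delta(k,N)$, converting the desired inequality into a statement purely about the four summands appearing in the formula~\eqref{Deltakn formula} for $\Delta(k,N)$. Specifically, substituting the definition of $T_0$ from Definition~\ref{T0 T def} into the formula for $\Delta(k,N)$, the $\frac12$ and the two Kronecker-symbol terms cancel, leaving
\[
T_0 = (k-1)N\big(1 - s_0^*(N)\big) + 6\big(\nu_\infty^*(N) - 1\big) + 12c_2(k)\bigg(\legendre{-4}N - \nu_2^*(N)\bigg) + 12c_3(k)\bigg(\legendre{-3}N - \nu_3^*(N)\bigg).
\]
So proving the lemma amounts to showing that $T - T_0 = 6 + 12c_2(k)(\legendre{-4}N - \nu_2^*(N)) + 12c_3(k)(\legendre{-3}N - \nu_3^*(N))$ is at least $0$ — wait, more precisely, we need $T \ge T_0 + 6 - (\text{the two correction terms})$, i.e.\ we need
\[
T - T_0 \ge 6 - 12c_2(k)\bigg(\legendre{-4}N - \nu_2^*(N)\bigg) - 12c_3(k)\bigg(\legendre{-3}N - \nu_3^*(N)\bigg).
\]

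First I would record, exactly as in the proof of Lemma~\ref{Deltakn ineq lemma}, that $|c_2(k)(\legendre{-4}N - \nu_2^*(N))| \le \frac14$, so $|12c_2(k)(\legendre{-4}N - \nu_2^*(N))| \le 3$. For the $3$-part I would be slightly more careful: when $3 \mid k$ we have $c_3(k) = \frac13$, so the term $12c_3(k)(\legendre{-3}N - \nu_3^*(N))$ has absolute value at most $4$; and when $3 \nmid k$, either $c_3(k) = 0$ (if $k \equiv 1 \bmod 3$) or $|c_3(k)| = \frac13$ (if $k \equiv 2 \bmod 3$), so again the term is bounded by $4$ in absolute value. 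Thus in the case $3 \mid k$, the right-hand side $6 - 12c_2(k)(\cdots) - 12c_3(k)(\cdots)$ is at most $6 + 3 + 4 = 13$; wait, but $T - T_0 = 3$ in that case, so this crude bound does not suffice. I need to exploit that when $3 \mid k$, the quantity $\legendre{-3}N - \nu_3^*(N)$ is actually a \emph{nonnegative} multiple of the worst case only in a favorable direction — more precisely, from the computation in the proof of Lemma~\ref{Deltakn ineq lemma}, $\legendre{-3}N - \nu_3^*(N)$ equals $0$ when $N$ is squarefree, equals $\legendre{-3}{N/9} \in \{-1,0,1\}$ when $9 \mid N$ with $N/9$ squarefree, and equals $\legendre{-3}N$ otherwise. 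The key point: it is never equal to $-1$ unless... hmm. Actually let me reconsider: when $3 \mid k$ so $c_3(k) = \frac13 > 0$, we want $-12 \cdot \frac13 (\legendre{-3}N - \nu_3^*(N)) = -4(\legendre{-3}N - \nu_3^*(N))$ to be small, i.e.\ we want $\legendre{-3}N - \nu_3^*(N) \ge $ something. Similarly $-4(\legendre{-4}... )$ should be handled via the full bound of $3$.

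The honest reorganization, which I expect to be the crux, is this: I would split into the cases $3 \mid k$ and $3 \nmid k$ and in each case bound $12c_2(k)(\legendre{-4}N - \nu_2^*(N)) + 12c_3(k)(\legendre{-3}N - \nu_3^*(N)) + (T - T_0 - 6)$ from below by $0$, using $T - T_0 = 3$ in the first case and $T - T_0 = 7$ in the second. In the second case ($3 \nmid k$), $|12c_3(k)(\cdots)| \le 4$ and $|12c_2(k)(\cdots)| \le 3$, so the sum of the two correction terms is at least $-7$, and $7 - 7 = 0$, giving the result immediately. In the first case ($3 \mid k$), $c_3(k) = \frac13$; here I use the fact that $12c_3(k)(\legendre{-3}N - \nu_3^*(N)) = 4(\legendre{-3}N - \nu_3^*(N)) \ge -4$ only when $\legendre{-3}N - \nu_3^*(N) = -1$, but from the case analysis this forces $\legendre{-3}N = -1$ (and $N$ not divisible by $9$, or the $N/9$ case) — and then I should check whether $\legendre{-3}N = -1$ is compatible with losing $4$ here; actually the subtlety is that I get $4 \cdot (-1) = -4$ from the $3$-term and $\le 3$ from (minus) the $2$-term wait I need to recount signs. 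We have $T_0 + 3 = T$, and $T_0 = (k-1)N(1-s_0^*) + 6(\nu_\infty^* - 1) + 12c_2(k)(\cdots) + 12c_3(k)(\cdots)$, so $T = (k-1)N(1-s_0^*) + 6\nu_\infty^* - 6 + 12c_2(k)(\cdots) + 12c_3(k)(\cdots) + 3 = \big[(k-1)N(1-s_0^*) + 6\nu_\infty^*\big] + \big[12c_2(k)(\cdots) + 12c_3(k)(\cdots) - 3\big]$; so I need $12c_2(k)(\legendre{-4}N-\nu_2^*(N)) + 12c_3(k)(\legendre{-3}N-\nu_3^*(N)) \ge 3$ when $3 \mid k$, i.e.\ with $c_3(k) = \frac13$, I need $12c_2(k)(\cdots) + 4(\legendre{-3}N - \nu_3^*(N)) \ge 3$. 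This looks false at first glance (take $\legendre{-3}N - \nu_3^*(N) = -1$), so I must be misreading the $T - T_0$ convention, or there is a sign I am flipping; I would double-check against the statement and, if needed, instead aim for the honest statement that in \emph{both} cases $12c_2(k)(\cdots) + 12c_3(k)(\cdots) + (T - T_0) \ge 6$, which with $T - T_0 \ge 3$ and the correction terms possibly as negative as $-3 - 4 = -7$ would require extra structure — most likely the resolution is that $c_3(k) = \frac13 > 0$ pairs with the observation that $\legendre{-3}N - \nu_3^*(N) \ge -\frac{13 - \text{stuff}}{}$... The cleanest route, and the one I would ultimately take, is to go back to~\eqref{Deltakn formula}: whenever $c_3(k) = \tfrac13$ (the only case where $T$ gets only $+3$), we also have $c_3(k) \ge 0$, and one checks directly that $\legendre{-3}N - \nu_3^*(N) \ge 0$ in \emph{every} case where it could be negative the sign of $\legendre{-3}{N/9}$ is actually forced to be nonnegative under the constraint $3\mid N$ — no: $\legendre{-3}{N/9}$ can be $-1$. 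I therefore expect the genuinely correct proof to use the exact value $T-T_0 = 3$ together with the bound $12c_2(k)(\cdots) \ge -3$ and the \emph{one-sided} bound $12 c_3(k)(\legendre{-3}N - \nu_3^*(N)) = 4(\legendre{-3}N - \nu_3^*(N)) \ge 4 \cdot 0 = 0$ coming from a case-by-case verification that, when $3 \mid N$ (which is where the $\nu_3^*$ subtraction bites) and $3 \mid k$, the contribution is nonnegative; combined with $-3 + 0 + \cdots$ hmm this still gives only $\ge T_0$ not $\ge T_0 + 3$. Given the delicacy, the main obstacle is precisely pinning down this $3$-adic bookkeeping; I would resolve it by writing out the (at most a dozen) residue-class cases for $N \bmod 9$ and $k \bmod 12$ explicitly, confirm the inequality $12c_2(k)(\legendre{-4}N - \nu_2^*(N)) + 12c_3(k)(\legendre{-3}N - \nu_3^*(N)) + (T - T_0) \ge 6$ holds in each, and conclude $T \ge (k-1)N(1 - s_0^*(N)) + 6\nu_\infty^*(N)$ as claimed.
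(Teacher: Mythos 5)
Your overall strategy --- unwind $T$ to $T_0$ and $T_0$ to the four summands of~\eqref{Deltakn formula}, then bound the correction terms --- is exactly the paper's, but the execution founders on an algebra slip at the very first step. When you substitute~\eqref{Deltakn formula} into Definition~\ref{T0 T def}, the terms $-12c_2(k)\legendre{-4}N$ and $-12c_3(k)\legendre{-3}N$ cancel the Kronecker symbols \emph{inside} the differences, and the $+6$ coming from the $\frac12$ cancels the $-6$ in $6(\nu_\infty^*(N)-1)$; the correct identity is
\[
T_0 = (k-1)N\big(1-s_0^*(N)\big) + 6\nu_\infty^*(N) - 12c_2(k)\nu_2^*(N) - 12c_3(k)\nu_3^*(N).
\]
The display you wrote for $T_0$ is in fact $12\Delta(k,N)$ --- no cancellation has actually been performed --- and this is the source of all the subsequent sign confusion. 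With the correct identity the paper's proof is two lines: $|12c_2(k)\nu_2^*(N)|\le 3$ and $|12c_3(k)\nu_3^*(N)|\le 4$, the latter vanishing identically when $c_3(k)=0$, and adding the constant from the definition of $T$ finishes. Your closing proposal, to verify $12c_2(k)(\legendre{-4}N-\nu_2^*(N)) + 12c_3(k)(\legendre{-3}N-\nu_3^*(N)) + (T-T_0) \ge 6$ by cases, targets the wrong inequality (the relevant one is $(T-T_0) - 12c_2(k)\nu_2^*(N) - 12c_3(k)\nu_3^*(N) \ge 0$), so carrying out that case check would not prove the lemma.

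That said, your unease about the $3\mid k$ bookkeeping is well founded, though for a reason you could not isolate from the wrong formula. Even after the algebra is repaired, the increment $+3$ in the definition of $T$ suffices exactly when the $\nu_3^*$ term is absent, i.e.\ when $c_3(k)=0$, which happens for $k\equiv 1\mod 3$ --- not for $3\mid k$, where $c_3(k)=\frac13$ and $-12c_3(k)\nu_3^*(N)$ can equal $-4$. Indeed, for $k=6$ and $N=7$ one computes $T_0=-1$ and $T=2$, while the right-hand side of the lemma equals $6$, so the statement fails with the definition of $T$ as printed; the condition ``$3\mid k$'' there should evidently read ``$k\equiv 1\mod 3$''. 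You correctly sensed this tension but attributed it to the wrong term and left it unresolved, so the proof as written does not close.
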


\begin{proof}
Comparing Definition~\ref{T0 T def} and equation~\eqref{Deltakn formula}, we see that
\begin{align*}
T_0 &= (k-1)N \big( 1 - s_0^{*}(N) \big) + 6\nu_{\infty}^{*}(N) - 12c_2(k)\nu_2^{*}(N) - 12c_3(k)\nu_3^{*}(N) \\
&\ge (k-1)N \big( 1 - s_0^{*}(N) \big) + 6\nu_{\infty}^{*}(N) - \begin{cases}
3, &\text{if } 3\mid k, \\
7, &\text{if } 3\nmid k
\end{cases}
\end{align*}
by Definitions~\ref{c2 c3 def} and~\ref{nu2 nu3 def}; this inequality is equivalent to the statement of the lemma.
\end{proof}

\begin{definition}  \label{L theta def}
Given a positive integer $N$ and a positive even integer $k$, we will also use the notation
\begin{align*}
\L&= e^\gamma \log\log\sqrt N + \frac{2.50637}{\log\log\sqrt N} \\
\theta &= \arccos \bigg( 1-\frac{486(k-1)N}{\L ^2T^3} \bigg),
\end{align*}
where $\gamma\approx 0.577$ is Euler's constant; note that $\L$ is positive when $N\ge10$. (We will see that in our application, $\theta$ is always well defined.)
\end{definition}

\noindent We emphasize that $T$, $\L$, and $\theta$ all depend on $k$ and $N$, though we have suppressed that dependence from the notation for the sake of readability.

The following proposition gives explicit, easily computable upper and lower bounds (given a value $A(k,N)$) for integers whose square divides~$N$.

\begin{prop}  \label{trig bounds prop}
Let $N$ be a positive integer. If $d\ge27$ is an integer such that $d^2\mid N$, then
\[
\frac{\L T}{9}\cos\bigg(\frac{\theta}{3}-\frac{2\pi}{3}\bigg)+\frac{\L T}{18} < d < \frac{\L T}{9}\cos\frac{\theta}{3}+\frac{\L T}{18},
\]
where $\L$, $T$, and $\theta$ are as in Definitions~\ref{T0 T def} and~\ref{L theta def}.
\end{prop}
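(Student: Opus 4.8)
The plan is to start from Lemma~\ref{T ineq lemma}, which gives $T \ge (k-1)N\big(1-s_0^*(N)\big) + 6\nu_\infty^*(N)$, and extract a genuine cubic inequality in the variable $d$. Let $D$ be the largest integer with $D^2 \mid N$, so that $d \mid D$, and recall from Definition~\ref{mult fns def} that $\nu_\infty^*(N) = \phi(D)$ and that $\nu_\infty^*$ is monotone under divisibility, while $1 - s_0^*(N) \ge 1 - s_0^*(d^2) = 1/d^2$ since $d^2 \mid N$. Substituting these into Lemma~\ref{T ineq lemma} gives
\[
T \ge \frac{(k-1)N}{d^2} + 6\phi(d),
\]
using $\phi(d) \le \phi(D) = \nu_\infty^*(N)$. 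The term $6\phi(d)$ is awkward because $\phi$ is not monotone and not easily invertible, so the first real step is to replace it by a clean lower bound: the standard bound $\phi(d) \ge d/\L_0$ where $\L_0 = e^\gamma \log\log d + \tfrac{2.50637}{\log\log d}$ (this is the explicit form of Rosser--Schoenfeld's lower bound for $\phi$). Since $d^2 \mid N$ forces $d \le \sqrt N$, and the function $x \mapsto e^\gamma\log\log x + 2.50637/\log\log x$ is increasing for $x$ large enough, we get $\phi(d) \ge d/\L$ with $\L$ as in Definition~\ref{L theta def}. Hence
\[
T \ge \frac{(k-1)N}{d^2} + \frac{6d}{\L}.
\]

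The next step is to turn this into the claimed trigonometric bounds by solving the associated cubic. Multiplying through by $\L d^2$ and rearranging, the inequality $T \ge (k-1)N/d^2 + 6d/\L$ becomes $6d^3 - \L T d^2 + (k-1)N\L \le 0$, i.e. $d^3 - \tfrac{\L T}{6} d^2 + \tfrac{(k-1)N\L}{6} \le 0$. I would analyze the cubic $f(x) = x^3 - \tfrac{\L T}{6}x^2 + \tfrac{(k-1)N\L}{6}$; it has a negative leading-order constant term sign pattern forcing (when the discriminant is right) three real roots, of which $d$ must lie between the two positive ones. To solve it explicitly I would complete the cube via the substitution $x = y + \tfrac{\L T}{18}$ (shifting by one-third the coefficient of $x^2$), reducing to a depressed cubic $y^3 + py + q = 0$ whose coefficients are explicit in $\L$, $T$, $N$, $k$; then apply the trigonometric (Viète) formula for three real roots, $y_j = \tfrac{2}{\sqrt 3}\sqrt{-p}\,\cos\!\big(\tfrac{1}{3}\arccos(\cdots) - \tfrac{2\pi j}{3}\big)$. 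Bookkeeping the constants, $\sqrt{-p}$ will come out proportional to $\L T$ and the argument of $\arccos$ will be exactly $1 - \tfrac{486(k-1)N}{\L^2 T^3}$, matching $\theta$ in Definition~\ref{L theta def}; the two relevant roots (with shifts $2\pi/3$ and $0$) then give the displayed lower and upper bounds after adding back $\tfrac{\L T}{18}$. The hypothesis $d \ge 27$ is what is needed to guarantee $\log\log d > 0$ and that the monotonicity of the $\L$-function and the Rosser--Schoenfeld bound are both valid in the required range, and it is presumably also what makes $\theta$ well-defined (the $\arccos$ argument lies in $[-1,1]$) — I would verify this using the fact, promised in the section introduction, that $T$ cannot be much smaller than $\sqrt[3]{N}$.

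The main obstacle I anticipate is not the cubic-solving itself (that is a routine, if fiddly, application of Cardano/Viète) but rather pinning down that $\theta$ is genuinely well-defined in all relevant cases, i.e. that $0 \le \tfrac{486(k-1)N}{\L^2 T^3} \le 2$, equivalently $\L^2 T^3 \ge 243(k-1)N$. This requires a lower bound on $T$ in terms of $N$, which in turn comes back to Lemma~\ref{T ineq lemma}: from $T \ge (k-1)N/d^2 + 6d/\L$ together with the AM--GM inequality on the right-hand side (balancing the two terms, which are equal near $d \asymp (\L(k-1)N)^{1/3}$), one gets $T \gg ((k-1)N)^{1/3}/\L^{2/3}$ — this is the inequality \eqref{T AMGM} alluded to in the section introduction — and cubing yields $T^3 \gg (k-1)N/\L^2$ with an explicit constant that one checks exceeds $243$ once $N$ (hence $d \ge 27$, hence $N \ge 729$) is large enough. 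A secondary, purely technical point is justifying the replacement of $\log\log d$ by $\log\log\sqrt N$ in the definition of $\L$: since $d \le \sqrt N$ and the Rosser--Schoenfeld function is increasing on the range $d \ge 27$ (so $\log\log d \ge \log\log 27 > 0$), enlarging the argument only weakens the bound $\phi(d) \ge d/\L$ in the safe direction, so this step is harmless. Once these well-definedness and sign issues are dispatched, the proposition follows by reading off the two appropriate roots of the cubic.
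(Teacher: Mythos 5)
Your proposal is correct and follows essentially the same route as the paper: the bounds $1-s_0^*(N)\ge 1/d^2$ and $\nu_\infty^*(N)\ge\phi(d)>d/\L$ (via Rosser--Schoenfeld and $d\le\sqrt N$) feed into Lemma~\ref{T ineq lemma} to give the cubic inequality, which is then solved by the trigonometric form of Cardano's formula. The only cosmetic difference is that the paper settles the well-definedness of $\theta$ not by the AM--GM bound (which it reserves for Proposition~\ref{asymptotic d bounds prop}) but by a sign analysis of the cubic $f(x)=-\frac6\L x^3+Tx^2-(k-1)N$, noting $f(0)<0<f(d)$ together with the behaviour at $\pm\infty$ to conclude that all three roots are real.
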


\begin{proof}
Since $d^2\mid N$, by Definition~\ref{mult fns def} we have
\[
\frac{d^2-1}{d^2}\ge \frac1{d^2} \prod_{p^e\parallel d} (p^{2e}-1) = \prod_{p^e\parallel d} \frac{p^{2e}-1}{p^{2e}} \ge \prod_{p\mid d} \frac{p^2-1}{p^2} = s_0^{*}(d^2)\ge s_0^{*}(N),
\]
and consequently $1-s_0^{*}(N) \ge 1/d^2$. Furthermore, if we let $D$ be the largest integer such that $D^2\mid N$, then again by Definition~\ref{mult fns def},
\[
\nu_{\infty}^{*}(N) = \phi(D) \ge \phi(d) > \frac d{e^\gamma\log\log d+2.50637/\log\log d} \ge \frac d\L,
\]
where $\L$ is as in Definition~\ref{L theta def};
here the middle inequality is an explicit upper bound for $d/\phi(d)$ by Rosser--Schoenfeld~\cite[Theorem 15]{RS}, and the last inequality is due to the fact that $d\le\sqrt N$ and that the function $e^\gamma\log\log x+2.50637/\log\log x$ is increasing for $x\ge27$.
Therefore by Lemma~\ref{T ineq lemma},
\begin{equation}  \label{T inequality}
T \ge (k-1)N \big( 1 - s_0^{*}(N) \big) + 6\nu_{\infty}^{*}(N) > \frac{(k-1)N}{d^2} + \frac{6d}\L
\end{equation}
or equivalently
\begin{equation}  \label{cubic inequality}
-\frac6\L d^3 + Td^2 - (k-1)N > 0.
\end{equation}

Consider the cubic polynomial $f(x) = -\frac6\L x^3 + Tx^2 - (k-1)N$. The value $f(0)$ is negative, while $f(x)$ is positive when $x$ is sufficiently negative; therefore $f(x)$ has a negative root. On the other hand, $f(d)$ is positive by equation~\eqref{cubic inequality}, while $f(x)$ is negative when $x$ is sufficiently positive. Therefore $f(x)$ has three real roots $x_0,x_1,x_2$. The trigonometric form of Cardano's formula (see~\cite[equation A1.23]{CCLH}) yields an exact expression for these three roots:
\[
x_j = \frac{\L T}{9}\cos\bigg( \frac{\theta}{3}-\frac{2j\pi}{3} \bigg)+\frac{\L T}{18}, \quad j=0,1,2,
\]
where $\theta$ is as in Definition~\ref{L theta def}. One can check that $x_2 \le 0 \le x_1 \le x_0$; since $d$ is positive, the inequality~\eqref{cubic inequality} forces $x_1 < d < x_0$, which is the statement of the lemma.
\end{proof}

The results of the previous proposition can be converted into asymptotic bounds whose sizes are easier to gauge (although less suited for explicit computation).

\begin{prop} \label{asymptotic d bounds prop}
Let $N$ be a positive integer. If $d\ge27$ is an integer such that $d^2\mid N$, then
\begin{multline*}
\sqrt{\frac{(k-1)N}{12\Delta(k,N)}} + O\bigg( \frac{kN}{\Delta(k,N)^2\log\log N} \bigg) \le d \\
< 2e^\gamma \Delta(k,N)\log\log N + O\bigg( \frac{\Delta(k,N)}{\log\log N} + \log\log N \bigg).
\end{multline*}
\end{prop}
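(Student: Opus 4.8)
The plan is to extract the asymptotic bounds directly from the trigonometric expressions in Proposition~\ref{trig bounds prop} by analyzing the cubic inequality~\eqref{cubic inequality} in the two natural asymptotic regimes rather than manipulating Cardano's formula term by term. First I would establish the size of the relevant parameters: from Definition~\ref{T0 T def} we have $T = 12\Delta(k,N) + O(1)$, and from Definition~\ref{L theta def} we have $\L = e^\gamma\log\log\sqrt N + O(1/\log\log N) = e^\gamma\log\log N + O(1)$ (using $\log\log\sqrt N = \log\log N - \log 2 + o(1)$ and absorbing the shift, since only the leading constant matters for the stated error terms). The key structural fact, which I would record first, is that the positive roots $x_1 < x_0$ of $f(x) = -\tfrac{6}{\L}x^3 + Tx^2 - (k-1)N$ are exactly the two places where the curve $y = Tx^2 - (k-1)N$ crosses $y = \tfrac{6}{\L}x^3$, so $x_1$ is governed by balancing the quadratic term against the constant term (the small root) and $x_0$ is governed by balancing the cubic term against the quadratic term (the large root).

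For the lower bound, I would argue that near $x_1$ the cubic term $\tfrac{6}{\L}x^3$ is negligible, so $x_1$ is a perturbation of the root of $Tx^2 = (k-1)N$, namely $\sqrt{(k-1)N/T} = \sqrt{(k-1)N/(12\Delta(k,N))}\,(1 + O(1/\Delta))$. To make the perturbation estimate rigorous I would plug $x = \sqrt{(k-1)N/T}$ into $f$, compute $f$ at that point to be $-\tfrac{6}{\L}\big((k-1)N/T\big)^{3/2}$, which is negative, and compare with $f'$ near there to bound the distance from $x_1$; this yields the correction term $O\big(\tfrac{1}{\L}\cdot\tfrac{((k-1)N)^{3/2}}{T^{3/2}}\cdot\tfrac{1}{\sqrt{(k-1)N}\cdot\sqrt{T}}\big) = O\big(kN/(\Delta(k,N)^2\log\log N)\big)$ after inserting $T \asymp \Delta$ and $\L \asymp \log\log N$. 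For the upper bound, near $x_0$ the constant term $(k-1)N$ is negligible compared to the other two, so $x_0$ is a perturbation of the root of $\tfrac{6}{\L}x^3 = Tx^2$, namely $x = \L T/6 = 2e^\gamma\Delta(k,N)\log\log N\,(1+o(1))$; expanding $\L T/6$ honestly gives the main term $2e^\gamma\Delta(k,N)\log\log N$ plus the error $O\big(\Delta(k,N)/\log\log N + \log\log N\big)$ coming from the $O(1)$ slack in $T$ (contributing $O(\log\log N)$) and the $O(1/\log\log N)$ slack in $\L$ (contributing $O(\Delta/\log\log N)$), and the neglected constant term contributes a strictly smaller correction of order $(k-1)N/(\L T^2) = O(N/(\Delta^2\log\log N))$, which is absorbed.

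The main obstacle I anticipate is bookkeeping the error terms cleanly, since $\Delta(k,N)$, $N$, and $k$ are three parameters that can vary somewhat independently, and one must be careful that the perturbation arguments stay valid in the relevant range (in particular that $\theta$ is well defined, i.e.\ the argument of $\arccos$ lies in $[-1,1]$, which by~\eqref{T AMGM}-type reasoning follows from the existence of three real roots already guaranteed in the proof of Proposition~\ref{trig bounds prop}, and that $x_1$ is genuinely the middle root and not a double root). A convenient way to avoid delicate implicit-function estimates is to use the explicit Cardano expressions: expand $\cos\frac{\theta}{3}$ and $\cos\big(\frac{\theta}{3}-\frac{2\pi}{3}\big)$ using the small-angle behavior that comes from $1 - \cos\theta = 486(k-1)N/(\L^2 T^3)$ being small (which forces $\theta$ small, so $\theta \approx \sqrt{2\cdot 486(k-1)N}/(\L T^{3/2})$), then Taylor-expand $\cos(\theta/3) = 1 - \theta^2/18 + \cdots$ and $\cos(\theta/3 - 2\pi/3) = -\tfrac12\cos(\theta/3) + \tfrac{\sqrt3}{2}\sin(\theta/3)$; substituting these into $\tfrac{\L T}{9}\cos(\cdot) + \tfrac{\L T}{18}$ produces, for the upper bound, $\tfrac{\L T}{9} + \tfrac{\L T}{18} - \tfrac{\L T}{9}\cdot\tfrac{\theta^2}{18} + \cdots = \tfrac{\L T}{6} + O(\sqrt{kN/\Delta})$ and, for the lower bound, $\tfrac{\L T}{18}(1 - \sqrt3\,\theta) + \cdots$ together with $\tfrac{\sqrt3}{2}\cdot\tfrac{\L T}{9}\theta = \sqrt{(k-1)N/(12\Delta)}\,(1+o(1))$, both matching the claimed main terms after substituting the sizes of $T$ and $\L$. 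Finally I would convert $\log\log\sqrt N$ to $\log\log N$ and $T$ to $12\Delta(k,N)$ in the main terms, collect all the accumulated errors into the two stated $O$-expressions, and note that the hypothesis $d\ge27$ (and $N\ge10$, ensuring $\L>0$) is exactly what was used in Proposition~\ref{trig bounds prop}, so no new case analysis is needed.
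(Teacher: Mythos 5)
Your proposal follows essentially the same route as the paper's proof: first establish the intermediate bounds $\sqrt{(k-1)N/T}+O\big(kN/(\L T^2)\big)\le d<\L T/6$ (the paper gets the upper bound in one line from $T>6d/\L$, and explicitly notes it also follows from the right-hand inequality of Proposition~\ref{trig bounds prop} as you do), then substitute $T=12\Delta(k,N)\big(1+O(1/\Delta(k,N))\big)$ (justified via the bound $T>21$ from~\eqref{T AMGM}) and $\L=(1+o(1))e^\gamma\log\log N$, with the lower bound obtained by exactly the Puiseux/Taylor expansion of the Cardano cosine that you describe. The only blemish is the dropped factor of $3$ from $\sin(\theta/3)\approx\theta/3$ in your displayed quantity $\frac{\sqrt3}{2}\cdot\frac{\L T}{9}\theta$ (it should be $\frac{\sqrt3}{2}\cdot\frac{\L T}{9}\cdot\frac{\theta}{3}$), a typo that does not affect your (correct) stated conclusion $\sqrt{(k-1)N/T}$.
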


\begin{proof}
We first claim that
\begin{equation}  \label{semi asymptotic bounds}
\sqrt{\frac{(k-1)N}{T}} + O\bigg( \frac{kN}{\L T^2} \bigg) \le d < \frac{\L T}{6}.
\end{equation}
The upper bound follows directly from $T>6d/\L$, which is a consequence of equation~\eqref{T inequality}, or from the right-hand inequality in Proposition~\ref{trig bounds prop}. As for the lower bound, we use the Puiseux series approximation
\begin{align*}
\cos \bigg( \frac{\arccos(1-x)}3 - \frac{2\pi}3 \bigg) &= -\frac12 + \sqrt{\frac x6} + O(x),
\end{align*}
with $x={486(k-1)N}/{\L^2T^3}$ (so that $\theta=\arccos(1-x)$ by Definition~\ref{L theta def}), in the left-hand inequality of Proposition~\ref{trig bounds prop}. We obtain
\begin{align*}
d &\ge \frac{\L T}{9} \bigg( {-}\frac12 + \sqrt{\frac{486(k-1)N/\L^2T^3}6} + O \bigg( \frac{486(k-1)N}{\L^2T^3} \bigg) \bigg) +\frac{\L T}{18} \\
&= \sqrt{\frac{(k-1)N}T} + O \bigg( \frac{kN}{\L T^2} \bigg)
\end{align*}
as claimed.

Note that equation~\eqref{T inequality} implies
\begin{equation}  \label{T AMGM}
T > \frac{(k-1)N}{d^2} + \frac{6d}\L \ge \sqrt[3]{\frac{243(k-1)N}{\L^2}}
\end{equation}
by calculus or a weighted arithmetic mean/geometric mean inequality. The hypotheses of the proposition force $N\ge27^2$, and the right-hand side of equation~\eqref{T AMGM} is an increasing function of $N$ in this range; from these inequalities (and $k\ge2$) we deduce that $T > 21$. In particular, since $|T-12\Delta(k,N)| \le 20$, we are justified in writing $T = 12\Delta(k,N) (1 + O(1/{\Delta(k,N)}))$. From Definition~\eqref{L theta def}, we may also write $\L = (1+O(1/{(\log\log N)^2})) e^\gamma \log\log N$. These last two approximations convert equation~\eqref{semi asymptotic bounds} into the asymptotic form asserted by the proposition.
\end{proof}

We illustrate this last proposition with an example. Suppose that $N=Ep^2$, where $p>3$ is prime and $E\equiv1\mod{12}$ is a squarefree number not divisible by~$p$. (For numbers encountered in practice that are not squarefree but have no square factors that are easily found through direct computation, this factorization type is by the far the most likely. The simplifying assumption $E\equiv1\mod{12}$ is solely for the purposes of exposition.) The various multiplicative functions in the definitions of $G(k,N)$ and $A(k,N)$ take the following values: $s_0^{*}(N) = 1 - \frac{1}{p^2}$ and $\nu_{\infty}^{*}(N) = p-1$, while $\legendre {-4} N = \legendre {-3} N = 1$ (since $N$ is also congruent to $1$ modulo~$12$) and $\nu_2^*(N) = \nu_3^*(N) = 0$. Consequently, taking $k=2$,
\[
G(2,N) = \frac{1}{12}Ep^2 - \frac12 - \frac14 - \frac13 \quad\text{and}\quad A(2,N) = \frac1{12}E(p^2-1)-\frac{1}{2}(p-1)
\]
and therefore
\[
\Delta(2,N) = \frac{E + 6p - 19}{12}.
\]
From this evaluation, we see that if $p\asymp N^\alpha$, then $\Delta(2,N) \asymp N^{1-2\alpha}$ when $\alpha\le\frac13$, while $\Delta(2,N) \asymp N^\alpha$ when $\alpha\ge\frac13$.

When $\alpha\le \frac13$, so that $\Delta(2,N) \asymp N^{1-2\alpha}$, the lower bound on $d=p$ in Proposition~\ref{asymptotic d bounds prop} is $\asymp N^\alpha$ while the upper bound is $\asymp N^{1-2\alpha}\log\log N$; in particular, $p$ is quite close to the lower bound. On the other hand, when $\alpha>\frac13$, so that $\Delta(2,N) \asymp N^{\alpha}$, the lower bound in Proposition~\ref{asymptotic d bounds prop} is $\asymp N^{(1-\alpha)/2}$ while the upper bound is $\asymp N^\alpha\log\log N$; in particular, $p$ is quite close to the upper bound. In either case, one of the two bounds is always rather sharp in this example. (The bounds, while remaining valid, can become less sharp if the squarefull part of $N$ is more complicated.)

\section{Factorization of the squarefull part}  \label{two values section}

Until now, we have investigated the consequences of having one calculated value of $A(k,N)$. Theorem~\ref{2 inputs thm} goes further, asserting that we can completely factor the squarefull part of a number $N$ with access to two calculated values of $A(k,N)$. After three preliminary lemmas, we prove Theorem~\ref{2 inputs thm} at the end of this section.

\begin{lemma}  \label{factor squarefull lemma}
Let $N>1$ be an integer. Given the values $s_0^*(N)$ and $\nu_\infty^*(N)$ (as in Definition~\ref{mult fns def}), the complete factorization of the squarefull part of $N$ can be found in probabilistic polynomial time.
\end{lemma}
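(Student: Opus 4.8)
The plan is to recover the squarefull part of $N$ from the two given values by exploiting the product formulas in Definition~\ref{mult fns def}. Write the squarefull part of $N$ as $L = p_1^{e_1}\cdots p_\ell^{e_\ell}$ with each $e_i \ge 2$, so that $N = EL$ with $E$ squarefree and $\gcd(E,L)=1$. Since $s_0^*$ and $\nu_\infty^*$ are supported only on the squarefull part, we have $s_0^*(N) = s_0^*(L) = \prod_i (1 - p_i^{-2})$ and $\nu_\infty^*(N) = \nu_\infty^*(L) = \prod_i (p_i - 1) p_i^{\lfloor e_i/2 - 1\rfloor}$. The first step is to observe that $s_0^*(N)$ is a rational number whose denominator (in lowest terms) is exactly $\prod_i p_i^2$: indeed $s_0^*(N) = \prod_i (p_i^2 - 1)/\prod_i p_i^2$, and no $p_i$ divides $p_j^2 - 1$ for any $j$ (including $j=i$), so no cancellation occurs. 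Hence from the denominator of $s_0^*(N)$ we read off $R := \prod_i p_i$, the radical of the squarefull part, though not yet the individual primes.

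The second step is to factor $R$ into its prime factors $p_1, \dots, p_\ell$. Here is where randomness enters, and this is the step I expect to be the main obstacle to a fully deterministic statement: we do not have a deterministic polynomial-time factoring algorithm even for a number of this special shape. However, $R$ is squarefree and we will see it is quite small relative to $N$ (it divides $\sqrt{N}$, and more precisely $R \le N^{1/2}$ with equality only in degenerate cases), and in any event we only need to split a squarefree integer; this can be done in probabilistic polynomial time by any of the standard methods (for instance, repeatedly choosing random $a$ and computing $\gcd$'s, or more robustly invoking a general-purpose probabilistic factoring routine). Since the number of prime factors of $R$ is $O(\log R) = O(\log N)$, polynomially many splitting operations suffice. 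This yields the complete list $p_1, \dots, p_\ell$ of primes dividing the squarefull part.

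The third and final step is to determine each exponent $e_i$. Once the $p_i$ are known, I would proceed greedily: for each $i$, compute the exact power $p_i^{f_i} \parallel N$ — but $N$ itself is given, so $f_i$ is obtained simply by trial division of $N$ by $p_i$, which is polynomial time. (Alternatively, and without touching $N$ again, one can extract the exponents from $\nu_\infty^*(N) = \prod_i (p_i-1) p_i^{\lfloor e_i/2 - 1\rfloor}$ by dividing out the known factors $p_i - 1$ and reading off the $p_i$-adic valuations of what remains, recovering $\lfloor e_i/2 - 1\rfloor$ and hence $e_i$ up to a parity ambiguity; the parity is then pinned down by a single additional check against $N$ or against $s_0^*(N)$.) Setting $E = N / (p_1^{e_1}\cdots p_\ell^{e_\ell})$ gives the squarefree cofactor, completing the factorization $N = E p_1^{e_1}\cdots p_\ell^{e_\ell}$ of the squarefull part in probabilistic polynomial time. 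The only genuinely nondeterministic ingredient is the factorization of the squarefree radical $R$ in the second step; everything else is elementary gcd and division arithmetic on integers of size at most $N$.
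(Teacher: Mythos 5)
There are two genuine gaps here, and the second one is fatal. First, your claim in Step 1 that no cancellation occurs in $\prod_i (p_i^2-1)/\prod_i p_i^2$ is false: $p_i$ can certainly divide $p_j^2-1$ for $j\ne i$. For $N=36$ one has $s_0^*(36)=\frac34\cdot\frac89=\frac23$, whose denominator is $3$, not $36$; your procedure would miss the prime $2$ entirely (and the denominator need not even be a perfect square: $s_0^*(225)=\frac{64}{75}$). The denominator is only guaranteed to be a divisor of $\prod_{p\mid L}p^2$ that is greater than $1$, so it exposes \emph{some} primes of the squarefull part but not necessarily all of them. The paper repairs exactly this by recursing: after factoring the denominator $d$, it splits off $b=\gcd(d^\infty,N)$, updates $s_0^*$ and $\nu_\infty^*$ multiplicatively for the cofactor, and repeats until the remaining squarefull part is trivial; only $O(\log N)$ rounds are needed.

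Second, and more seriously, Step 2 asserts that a squarefree integer $R$ can be factored ``in probabilistic polynomial time by any of the standard methods.'' No such method is known: randomly chosen $\gcd$'s find a nontrivial factor with only exponentially small probability, general-purpose factoring routines are subexponential at best, and the presumed hardness of precisely this task is the motivation for the entire paper. The fact that $R\le\sqrt N$ does not help, since that is still exponentially large in the input length. This is where the second given value must enter, and your write-up never uses $\nu_\infty^*(N)$ except optionally for the exponents --- a sign that the essential idea is missing. The paper's key observation is that if $d$ is the denominator of $s_0^*(L)$, then $d\,\nu_\infty^*(L)=d\prod_{p^e\parallel L}(p-1)p^{\lfloor e/2-1\rfloor}$ is a multiple of $\phi(d)$, and it then invokes the classical cryptographic fact (Shoup, Section 10.4) that given $d$ together with any multiple of $\phi(d)$, one can factor $d$ in probabilistic polynomial time. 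That is the only legitimate source of randomness in the argument. Your Step 3 (recovering the exponents by trial division once the primes are in hand) is fine.
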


\begin{proof}
Write $N=EL$ as the product of its squarefree part $E$ and its squarefull part $L$ with $(E,L)=1$, and note that we know the quantities $s_0^{*}(L)=s_0^{*}(N)$ and $\nu_{\infty}^{*}(L)=\nu_{\infty}^{*}(N)$.
We claim that it suffices to find a divisor $d>1$ of $L$ that we can factor completely. For if we have such a divisor $d$, then from its prime factors we can easily compute a factorization $N=bn$, where $(b,n)=1$ and the primes dividing $b$ are exactly the primes dividing~$d$. (Sometimes one writes $b=\gcd(d^\infty,N)$ to describe this factor.) We can then compute $s_0^*(n)=s_0^*(N)/s_0^*(b)$ and $\nu_\infty^*(n)=\nu_\infty^*(N)/\nu_\infty^*(b)$ from the known values $s_0^*(N)$ and $\nu_\infty^*(N)$ and directly from the definitions of $s_0^*(b)$ and $\nu_\infty^*(b)$, and then repeat recursively (setting $N=n$) until $n=1$. There are $o(\log N)$ prime factors of $N$ initially, which means that the number of divisions/multiplications needed in each calculation in this procedure, as well as the number of recursive calls to the procedure itself, are $\ll \log N$; and the integers that appear, along with the numerators and denominators of the rational numbers that appear, are all bounded by~$N$. (This utilization of the divisor $d$ is completely deterministic.)

To find such a divisor $d$, we simply set $d$ equal to the denominator of $s_0^*(L)$. This denominator cannot equal $1$ since $0<s_0^*(L)<1$ (here we use the fact that $L$ is squarefull, so that even during the recursion we always have $s_0^*(L)<1$), and by Definition~\ref{mult fns def} it is clearly a divisor of $\prod_{p\mid L} p^2$ which itself divides~$L$. On the other hand, note that $d\nu_{\infty}^{*}(L) = d \prod_{p^e\parallel L} (p-1)p^{\floor{e/2-1}}$ is a multiple of $d \prod_{p\mid d} (p-1)$ which in turn is a multiple of~$\phi(d)$. All that remains is to use the fact, well known to cryptographers (see~\cite[Section 10.4]{S}), that given a number $d$ and a multiple of $\phi(d)$, there is a probabilistic polynomial-time algorithm for factoring~$d$.
\end{proof}

Our general strategy, therefore, is to use two known values of $A(k,N)$ to determine the values $s_0^*(N)$ and $\nu_\infty^*(N)$, so that the above lemma can be applied. However, the definition of $A(k,N)$ also includes the two other multiplicative functions $\nu_2^{*}(N)$ and $\nu_3^{*}(N)$. In the next two lemmas we show that we can determine the values of these simpler functions directy from $A(k,N)$.

\begin{lemma}  \label{factor squarefull lemma 1.5}
Let $k$ be a positive even integer, and let $N$ be a positive integer.
\begin{enumerate}
\item Suppose that $9\mid N$ but $27\nmid N$. Then $N/9$ is squarefree if and only if
\begin{equation}  \label{N/9}
A(k,N) = \frac{2(k-1)}{27}N - 1 - c_3(K) \legendre{-3}{N/9}.
\end{equation}
\item Suppose that $4\mid N$ but $8\nmid N$. Then $N/4$ is squarefree if and only if
\begin{equation}  \label{N/4}
A(k,N) = \frac{k-1}{16}N - \frac12 - c_2(K) \legendre{-4}{N/4}.
\end{equation}
\end{enumerate}
\end{lemma}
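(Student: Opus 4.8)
The plan is to prove both parts by substituting directly into the formula of Proposition~\ref{AkN formula} and tracking the values of the four multiplicative functions $s_0^{*}$, $\nu_\infty^{*}$, $\nu_2^{*}$, $\nu_3^{*}$ under the divisibility hypotheses. Write $N=9M$ with $\gcd(M,3)=1$ in part (1), so that $9\parallel N$, and $N=4M$ with $\gcd(M,2)=1$ in part (2), so that $4\parallel N$; in each case the assertion is an equivalence between ``$M$ is squarefree'' (that is, ``$N/9$ squarefree'' resp.\ ``$N/4$ squarefree'') and a specific exact value of $A(k,N)$, and I will treat the two implications separately.

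For the forward implications, assume $M$ is squarefree. Then in part (1) we have $s_0^{*}(N)=s_0^{*}(9)=\tfrac89$ and $\nu_\infty^{*}(N)=\nu_\infty^{*}(9)=2$, and in part (2) we have $s_0^{*}(N)=s_0^{*}(4)=\tfrac34$ and $\nu_\infty^{*}(N)=\nu_\infty^{*}(4)=1$. The crucial point is that one of the two remaining multiplicative functions is forced to vanish: in part (1) we have $\nu_2^{*}(N)=0$, because $9\mid N$ rules out both nonzero alternatives in Definition~\ref{nu2 nu3 def} (if $4\mid N$ then $9\mid N/4$, so $N/4$ is not squarefree), and symmetrically $\nu_3^{*}(N)=0$ in part (2); meanwhile $\nu_3^{*}(N)=-\legendre{-3}{N/9}$ in part (1) and $\nu_2^{*}(N)=-\legendre{-4}{N/4}$ in part (2), straight from Definition~\ref{nu2 nu3 def}. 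Plugging these values into Proposition~\ref{AkN formula} collapses the formula to exactly the right-hand side of~\eqref{N/9}, resp.\ of~\eqref{N/4}.

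For the converse implications, assume $M$ is \emph{not} squarefree; I will show $A(k,N)$ is strictly less than the relevant right-hand side, hence unequal to it. Subtracting the formula of Proposition~\ref{AkN formula} from that right-hand side and using $s_0^{*}(N)=\tfrac89 s_0^{*}(M)$, $\nu_\infty^{*}(N)=2\nu_\infty^{*}(M)$ in part (1) (resp.\ $s_0^{*}(N)=\tfrac34 s_0^{*}(M)$, $\nu_\infty^{*}(N)=\nu_\infty^{*}(M)$ in part (2)), the difference becomes the sum of a nonnegative term that is a positive multiple of $(k-1)N\bigl(1-s_0^{*}(M)\bigr)$, a nonnegative term that is a positive multiple of $\nu_\infty^{*}(M)-1$, and a correction term equal to $-c_3(k)\legendre{-3}{M}$ in part (1) (resp.\ $-c_2(k)\legendre{-4}{M}$ in part (2)) whose absolute value is at most $\tfrac13$ (resp.\ $\tfrac14$), since the other of $\nu_2^{*}(N),\nu_3^{*}(N)$ is identically $0$ here. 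In part (2), $M$ is odd, so its square factor is $p^2$ for an odd prime $p$; then $\nu_\infty^{*}(M)\ge\nu_\infty^{*}(p^2)=p-1\ge2$, and the $\nu_\infty^{*}$ term alone exceeds $\tfrac14$. In part (1), if $M$ has an odd prime square factor the same estimate works (now $\nu_\infty^{*}(N)\ge4$), while if it does not, then $M$ not squarefree forces $4\mid M$, so $N\ge36$ and $1-s_0^{*}(M)\ge\tfrac14$, whence the first term is at least $\tfrac{k-1}{12}\cdot36\cdot\tfrac89\cdot\tfrac14=\tfrac23>\tfrac13$.

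I expect no genuine obstacle: the lemma is essentially a bookkeeping exercise in the definitions, and the only point calling for a separate (trivial) estimate is the sub-case of part (1) in which the failure of squarefreeness of $N/9$ is caused entirely by the prime $2$, where one must appeal to the $s_0^{*}$ term rather than the $\nu_\infty^{*}$ term to overcome the correction. The care required is simply to verify, in each case, exactly which of $\nu_2^{*}(N)$ and $\nu_3^{*}(N)$ vanishes and that the relevant inequalities are strict.
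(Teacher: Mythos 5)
Your proof is correct, and the overall skeleton matches the paper's: both directions come from substituting the hypotheses into Proposition~\ref{AkN formula}, with the forward direction an immediate evaluation of $s_0^*$, $\nu_\infty^*$, $\nu_2^*$, $\nu_3^*$ and the converse a lower bound on the difference between the two sides. The one genuine difference is how the converse is closed. The paper assumes the equality holds with $N/9$ (resp.\ $N/4$) not squarefree, picks a prime $p$ with $p^2$ dividing the cofactor, and combines the $s_0^*$ and $\nu_\infty^*$ contributions via a weighted AM--GM bound of the shape $\frac{N}{12}\cdot\frac{8}{9p^2}+p\ge(N/2)^{1/3}$, which forces $N<38$ and leaves a finite list of small $N$ to check directly. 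Your version instead splits on whether the offending square factor is an odd prime (where the $\nu_\infty^*$ term alone beats the $\pm\frac13$ or $\pm\frac14$ correction) or, in part~(a), the prime $2$ (where $4\mid M$ forces $N\ge36$ and the $s_0^*$ term contributes at least $\frac23$). This trades the paper's uniform one-line inequality for a two-case argument, but in return it needs no residual finite verification and yields the slightly stronger conclusion that the stated value strictly exceeds $A(k,N)$ whenever the cofactor is not squarefree. Both are complete; yours is arguably more self-contained.
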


\begin{proof}
By direct calculation, we may assume that $N\ge38$. Proposition~\ref{AkN formula} immediately implies both the equality~\eqref{N/9} when $\frac N9$ is squarefree and the equality~\eqref{N/4} when $\frac N4$ is squarefree, so it remains only to prove the converses. In part (a), the equality~\eqref{N/9} can be written as 
\[
\frac{k-1}{12}Ns_0^{*}(N)-\frac{1}{2}\nu_{\infty}^{*}(N)+c_3(k)\nu_3^{*}(N) = \frac{2(k-1)}{27}N - 1 - c_3(k) \legendre{-3}{N/9}
\]
(we know that $\nu_2^{*}(N)=0$ since $9\mid N$), or equivalently
\[
\frac{k-1}{12}N \bigg( \frac89 - s_0^{*}(N) \bigg) + \frac{\nu_{\infty}^{*}(N)+2}{2} = c_3(k) \bigg( \nu_3^{*}(N) + \legendre{-3}{N/9} \bigg) + 2.
\]
Suppose, for the sake of contradiction, that $\frac N9$ is not squarefree. Choose a prime $p$ such that $p^2\mid \frac N9$, and note that $p\ne3$ since $27\nmid N$. Then $s_0^*(N) \le \frac89(1-\frac1{p^2})$ and $\nu_{\infty}^{*}(N) \ge 2(p-1)$ (and $k-1\ge1$), and so
\[
\frac1{12}N \frac8{9p^2} + p \le c_3(k) \bigg( \nu_3^{*}(N) + \legendre{-3}{N/9} \bigg) + 2 \le \frac83.
\]
However, the left-hand side is at least $(\frac N2)^{1/3}$ (for any positive real number $p$, by an easy calculus exercise). Therefore we must have $N \le 2(\frac83)^3 < 38$, a contradiction. The proof of part (b) is similar, starting from the given equality
\[
\frac{k-1}{12}Ns_0^{*}(N)-\frac{1}{2}\nu_{\infty}^{*}(N)+c_2(k)\nu_2^{*}(N) = \frac{k-1}{16}N - \frac12 - c_2(k) \legendre{-4}{N/4}
\]
and eventually deducing that
\[
\frac1{12}N \frac3{4p^2} + \frac p2 \le c_2(k) \bigg( \nu_2^{*}(N) + \legendre{-4}{N/4} \bigg) + 1 \le \frac32,
\]
forcing $N\le32$ which is again a contradiction.
\end{proof}

\begin{lemma}  \label{factor squarefull lemma 1.75}
Let $k$ be a positive even integer, and let $N$ be a positive integer. Given the value $A(k,N)$, we can determine the values $\nu_2^*(N)$ and $\nu_3^*(N)$.
\end{lemma}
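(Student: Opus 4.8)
The plan is to reduce the determination of $\nu_2^*(N)$ and $\nu_3^*(N)$ to a short list of conditions, each of which can be checked quickly given $A(k,N)$ and~$N$. First I would dispose of small $N$: if $N\le 9$ we can factor $N$ by inspection and read $\nu_2^*(N)$ and $\nu_3^*(N)$ directly off Definition~\ref{nu2 nu3 def}, so from now on assume $N\ge10$. Since $G(k,N)$ can be computed without any factoring (Definition~\ref{GkN def}) while $A(k,N)$ is given to us, Corollary~\ref{main cor} lets us decide whether $N$ is squarefree simply by testing the equality $A(k,N)=G(k,N)$. If $N$ turns out to be squarefree, Definition~\ref{nu2 nu3 def} gives $\nu_2^*(N)=\legendre{-4}N$ and $\nu_3^*(N)=\legendre{-3}N$ outright, and these depend only on $N$ modulo $4$ and modulo~$3$.

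So the remaining case is that $N$ is not squarefree, and here I would first compute the exact exponents $a$ and $b$ of $2$ and of $3$ in~$N$ (polynomial time, since $a,b\le\log_2 N$). The key observation is that the ``middle'' branch of the definition of $\nu_2^*(N)$ can produce a nonzero value only when $N/4$ is squarefree with $4\mid N$, and this can happen only if $a=2$: if $a\le1$ then $4\nmid N$; if $a=3$ then $N/4$ is even so $\legendre{-4}{N/4}=0$; and if $a\ge4$ then $4\mid N/4$, so $N/4$ is not squarefree. Thus $\nu_2^*(N)=0$ unless $a=2$, and when $a=2$ we have exactly the hypothesis ($4\mid N$, $8\nmid N$) of Lemma~\ref{factor squarefull lemma 1.5}(b), which tells us that $N/4$ is squarefree if and only if equality~\eqref{N/4} holds. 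Since $A(k,N)$ is known and everything else in~\eqref{N/4} is trivial to compute (note $\legendre{-4}{N/4}$ depends only on $N/4$ modulo~$4$), we learn whether $N/4$ is squarefree and hence whether $\nu_2^*(N)=-\legendre{-4}{N/4}$ or $\nu_2^*(N)=0$. The same reasoning with Lemma~\ref{factor squarefull lemma 1.5}(a) handles $\nu_3^*(N)$: it vanishes unless $b=2$ (the argument is identical, using that $3\mid N/9$ when $b=3$ and $9\mid N/9$ when $b\ge4$), and when $b=2$ we have $9\mid N$, $27\nmid N$, so equality~\eqref{N/9} decides whether $N/9$ is squarefree and thereby pins down $\nu_3^*(N)$.

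I do not expect a genuine obstacle here; the argument is essentially an assembly of Corollary~\ref{main cor} and Lemma~\ref{factor squarefull lemma 1.5}. The one step that needs care is the bookkeeping just described---verifying that the ``middle'' branches of Definition~\ref{nu2 nu3 def} can contribute a nonzero value only in the precise situations ($2^2\parallel N$, respectively $3^2\parallel N$) in which Lemma~\ref{factor squarefull lemma 1.5} applies---together with making sure the finitely many small $N$ outside the range of Corollary~\ref{main cor} are handled separately.
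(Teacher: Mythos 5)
Your proposal is correct and follows essentially the same route as the paper: decide squarefreeness from $A(k,N)$ via Theorem~\ref{main thm}/Corollary~\ref{main cor}, observe that the middle branches of Definition~\ref{nu2 nu3 def} can be nonzero only when $2^2\parallel N$ (resp.\ $3^2\parallel N$), and in those cases invoke Lemma~\ref{factor squarefull lemma 1.5} to test whether $N/4$ (resp.\ $N/9$) is squarefree. The only difference is organizational (you case-split on the exact exponents of $2$ and $3$ and treat $N\le 9$ by inspection, while the paper splits on divisibility by $4$, $8$, $9$, $27$), and your bookkeeping for the degenerate exponents is accurate.
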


\begin{proof}
When $4\nmid N$ and $9\nmid N$, Theorem~\ref{main thm} and the known value $A(k,N)$ allow us to decide whether $N$ is squarefree, which is all that is needed to calculate $\nu_2^*(N)$ and $\nu_3^*(N)$ in this case. When
$9\mid N$, we immediately know that $\nu_2^*(N)=0$, and if $27\mid N$ then $\nu_3^*(N)=0$ as well; if $27\nmid N$, Lemma~\ref{factor squarefull lemma 1.5}(a) allows us to determine whether $\frac N9$ is squarefree, which is what is needed to calculate $\nu_3^*(N)$. Finally, when $4\mid N$, we immediately know that $\nu_3^*(N)=0$, and if $8\mid N$ then $\nu_2^*(N)=0$ as well; if $8\nmid N$, Lemma~\ref{factor squarefull lemma 1.5}(b) allows us to determine whether $\frac N4$ is squarefree, which is what is needed to calculate $\nu_2^*(N)$.
\end{proof}

\begin{proof}[Proof of Theorem~\ref{2 inputs thm}]
Define $A^*(k,N) = A(k,N) - c_2(k)\nu_2^{*}(N)-c_3(k)\nu_3^{*}(N)$, and note that $A^*(k,N)$ can be calculated easily from $A(k,N)$ by Lemma~\ref{factor squarefull lemma 1.75}.
In this notation, Proposition~\ref{AkN formula} implies that
\begin{align*}
A^*(k_1,N) &= \frac{k_1-1}{12}Ns_0^{*}(N)-\frac{1}{2}\nu_{\infty}^{*}(N) \\
A^*(k_2,N) &= \frac{k_2-1}{12}Ns_0^{*}(N)-\frac{1}{2}\nu_{\infty}^{*}(N).
\end{align*}
This system of two linear equations in the two unknown quantities $s_0^{*}(N)$ and $\nu_{\infty}^{*}(N)$ can be easily solved in (deterministic) polynomial time, giving quantities that are trivial to calculate from the hypothesized known values:
\begin{align*}
s_0^{*}(N) &= \frac {12\big(A^*(k_2,N)-A^*(k_1,N)\big)}{(k_2-k_1)N} \\
\nu_{\infty}^{*}(N) &= \frac{2\big(A^*(k_2,N)(k_1-1) - A^*(k_1,N)(k_2-1)\big)}{k_2-k_1}.
\end{align*}
Therefore, by Lemma~\ref{factor squarefull lemma}, we can obtain the factorization of the squarefull part of $N$ in probabilistic polynomial time, as claimed.
\end{proof}

\section{Testing for primality} \label{prm thm section}

In this section we establish Theorem~\ref{prm test thm}, concerning the function $B(k,N)$ given in Definition~\ref{B def}. Similarly to $A(k,N)$, an exact formula for $B(k,N)$ as a linear combination of multiplicative functions of $N$ was given by the second author~\cite{M}. The following proposition records this formula with just enough precision for our current purposes.

\begin{prop} \label{B prop}
For any positive integer $N$ and any positive even integer $k$,
\begin{equation}  \label{B formula}
B(k,N)=\frac{k-1}{12}Ns_0^{\#}(N)-\frac{1}{2}\nu_{\infty}^{\#}(N)+c_2(k)\nu_2^{\#}(N)+c_3(k)\nu_3^{\#}(N) + \delta_2(k) \mu(N),
\end{equation}
where $\mu$ is the M\"obius mu-function; $c_2$, $c_3$, and $\delta_2$ are as in Definition~\ref{c2 c3 def}; and $s_0^{\#}$, $\nu_{\infty}^{\#}$, $\nu_2^{\#}$, and $\nu_3^{\#}$ are certain multiplicative functions satisfying:
\begin{enumerate}
\item $Ns_0^\#(N) = \phi(N)$ when $N$ is squarefree;
\item $\nu_{\infty}^{\#}(p)=0$ for every prime $p$;
\item the only possible values of $\nu_2^{\#}(N)$ are $0$ and $\pm 2^\ell$ for some integer $0\le \ell\le\omega(N)$, where $\omega(N)$ is the number of distinct prime factors of~$N$, and similarly for $\nu_3^{\#}(N)$;
\item $\nu_2^\#(p) = \legendre{-4}p - 1$ and $\nu_3^\#(p) = \legendre{-3}p - 1$ for every prime $p$, where these Kronecker symbols are as in Definition~\ref{-4N -3N def}.
\end{enumerate}
\end{prop}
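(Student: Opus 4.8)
The statement is essentially a transcription of~\cite[Theorem~1]{M} together with the explicit prime-power descriptions of the multiplicative functions $s_0^\#$, $\nu_\infty^\#$, $\nu_2^\#$, $\nu_3^\#$ recorded there, so the only genuine work is to extract properties~(1)--(4). The plan is to use the structural fact (explicit in~\cite{M}) that these functions come from applying the Atkin--Lehner ``newform sieve'' to the classical dimension formula
\[
\dim S_k(\Gamma_0(M)) = \frac{k-1}{12}\psi(M) - \frac12 \nu_\infty(M) + c_2(k)\nu_2(M) + c_3(k)\nu_3(M) + \delta_2(k),
\]
where $\psi(M) = M\prod_{p\mid M}(1+1/p)$ is the index of $\Gamma_0(M)$ in the full modular group and $\nu_\infty, \nu_2, \nu_3$ are the usual counts of cusps and of elliptic points of orders $2$ and $3$: one has $B(k,N) = \sum_{d\mid N}\beta(N/d)\dim S_k(\Gamma_0(d))$, where $\beta$ is the multiplicative function with $\beta(1)=1$, $\beta(p)=-2$, $\beta(p^2)=1$, and $\beta(p^e)=0$ for $e\ge3$. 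Matching this against~\eqref{B formula} identifies $Ns_0^\#(N) = \sum_{d\mid N}\beta(N/d)\psi(d)$, $\nu_j^\#(N) = \sum_{d\mid N}\beta(N/d)\nu_j(d)$ for $j\in\{\infty,2,3\}$, and (since $\sum_{d\mid N}\beta(N/d)=\mu(N)$) the stray term $\delta_2(k)\mu(N)$; in particular all four functions are multiplicative, so each of~(1)--(4) need only be checked on prime powers.

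Properties~(1), (2), and~(4) then reduce to one-line local computations: at a prime $p$ the convolution retains only the terms $d=p$ and $d=1$, with coefficients $\beta(1)=1$ and $\beta(p)=-2$. Hence $\nu_\infty^\#(p) = \nu_\infty(p) - 2 = 2 - 2 = 0$, which is~(2); using $\nu_2(p) = 1 + \legendre{-4}p$ and $\nu_3(p) = 1 + \legendre{-3}p$ we get $\nu_2^\#(p) = \nu_2(p) - 2 = \legendre{-4}p - 1$ and $\nu_3^\#(p) = \legendre{-3}p - 1$, which is~(4); and $Ns_0^\#(p) = \psi(p) - 2 = (p+1) - 2 = p - 1$, so by multiplicativity $Ns_0^\#(N) = \prod_{p\mid N}(p-1) = \phi(N)$ whenever $N$ is squarefree, which is~(1).

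Property~(3) is the one place where a little care is needed, and I expect it to be the (mild) main obstacle. It suffices to show that $\nu_2^\#(p^e) \in \{0,\pm1,\pm2\}$ for every prime power $p^e$, and likewise for $\nu_3^\#$; one must be careful that $\nu_2$ and $\nu_3$ do not follow the generic Euler-product pattern at the exceptional primes $p=2$ and $p=3$ respectively (for instance $\nu_2(2^e)=0$ for $e\ge2$ while $\nu_2(2)=1$). Since $\beta(p^e)=0$ for $e\ge3$, one has $\nu_2^\#(p^e) = \nu_2(p^e) - 2\nu_2(p^{e-1}) + \nu_2(p^{e-2})$ for $e\ge2$, and a short case analysis on $e$ (and on whether $p$ is exceptional) confirms the bound in each case; the argument for $\nu_3^\#$ is identical. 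Granting this, a product of $\omega(N)$ numbers each lying in $\{0,\pm1,\pm2\}$ is either $0$ or of the form $\pm2^\ell$ with $\ell$ the number of factors of absolute value $2$, so $0\le\ell\le\omega(N)$, which is precisely property~(3).
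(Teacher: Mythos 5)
Your proposal is correct, but it is doing substantially more work than the paper does: the paper's entire proof of Proposition~\ref{B prop} is a citation, stating that the formula is \cite[Theorem~1]{M}, that the multiplicative functions are defined in \cite[Definition~1$'$]{M}, and that properties (a)--(d) ``follow immediately'' from those definitions. You instead reconstruct the content of that reference from scratch: you recover the functions $s_0^\#$, $\nu_\infty^\#$, $\nu_2^\#$, $\nu_3^\#$ by M\"obius-inverting the Atkin--Lehner decomposition $\dim S_k(\Gamma_0(N))=\sum_{M\mid N}\sigma_0(N/M)B(k,M)$ against the classical dimension formula, using $\beta=\mu*\mu$ (so $\beta(p)=-2$, $\beta(p^2)=1$, $\beta(p^e)=0$ for $e\ge3$), and then verify (a)--(d) by local computations at prime powers. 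All of these computations check out, including the exceptional-prime cases $\nu_2(2^e)=0$ for $e\ge2$ and $\nu_3(3^e)=0$ for $e\ge2$ needed for property (c), and the identification $\sum_{d\mid N}\beta(N/d)=\mu(N)$ that produces the $\delta_2(k)\mu(N)$ term. This is precisely the derivation underlying \cite[Theorem~1]{M}, so your route is not conceptually different from the source the paper leans on --- it is a self-contained re-proof of it, which buys independence from the reference at the cost of length; the paper's citation buys brevity at the cost of the reader having to consult \cite{M} to see why, e.g., $\nu_2^\#(p^e)\in\{0,\pm1,\pm2\}$.
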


\begin{proof}
The given formula appears as~\cite[Theorem 1]{M} (in which $B(k,N)$ is denoted by $g_0^{\#}(k,N)$). The exact definitions of the multiplicative functions $s_0^{\#}$, $\nu_2^{\#}$, $\nu_3^{\#}$, and $\nu_{\infty}^{\#}(N)$ are given in~\cite[Definition 1$'$]{M}, from which the listed properties follow immediately.
\end{proof}

\begin{cor}  \label{gstar sfree prop}
Let $k$ be a positive even integer. When $N$ is squarefree,
\begin{equation} \label{BkN sfree}
B(k,N) = \frac{(k-1)\phi(N)}{12} - \frac{\delta_1(N)}2 + c_2(k) \nu_2^\#(N) +  c_3(k) \nu_3^\#(N) + \delta_2(k) \mu(N),
\end{equation}
where $\delta_1$ is as in Definition~\ref{c2 c3 def}.
In particular,
\begin{align}
B(k,1) &= \frac{k-7}{12} + c_2(k) + c_3(k) + \delta_2(k) \label{Bk1} \\
B(k,p) &= \frac{(k-1)(p-1)}{12} + c_2(k) \nu_2^\#(p) + c_3(k) \nu_3^\#(p) - \delta_2(k) \label{Bkp} \\
B(k,pq) &= \frac{(k-1)(p-1)(q-1)}{12} + c_2(k) \nu_2^\#(p)\nu_2^\#(q) + c_3(k) \nu_3^\#(p)\nu_3^\#(q) + \delta_2(k) \label{Bkpq}
\end{align}
for any distinct primes $p$ and~$q$.
\end{cor}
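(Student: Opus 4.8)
The plan is to derive Corollary~\ref{gstar sfree prop} directly from Proposition~\ref{B prop} by specializing the formula~\eqref{B formula} to squarefree~$N$ and then to the particular shapes $N=1$, $N=p$, and $N=pq$. First I would establish~\eqref{BkN sfree}: starting from~\eqref{B formula}, I replace $Ns_0^\#(N)$ by $\phi(N)$ using property~(a), and I replace $\nu_\infty^\#(N)$ by $\delta_1(N)$. This last substitution is the one point requiring a small argument: since $\nu_\infty^\#$ is multiplicative, property~(b) gives $\nu_\infty^\#(p)=0$ for every prime, so for squarefree $N>1$ the product $\nu_\infty^\#(N)=\prod_{p\mid N}\nu_\infty^\#(p)$ is an empty-or-nonempty product of zeros and hence vanishes, while $\nu_\infty^\#(1)=1$ by the convention that a multiplicative function takes the value $1$ at~$1$; thus $\nu_\infty^\#(N)=\delta_1(N)$ on squarefree integers. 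Substituting both identities into~\eqref{B formula} yields~\eqref{BkN sfree} verbatim, with the $c_2(k)\nu_2^\#(N)$, $c_3(k)\nu_3^\#(N)$, and $\delta_2(k)\mu(N)$ terms simply carried along unchanged.

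Next I would read off the three displayed special cases by plugging $N=1$, $N=p$, and $N=pq$ into~\eqref{BkN sfree}. For $N=1$: $\phi(1)=1$, $\delta_1(1)=1$, $\mu(1)=1$, and $\nu_2^\#(1)=\nu_3^\#(1)=1$ (multiplicativity again), so the right-hand side becomes $\frac{k-1}{12}-\frac12+c_2(k)+c_3(k)+\delta_2(k)$, and one checks $\frac{k-1}{12}-\frac12=\frac{k-7}{12}$, giving~\eqref{Bk1}. For $N=p$: $\phi(p)=p-1$, $\delta_1(p)=0$, $\mu(p)=-1$, and $\nu_2^\#(p)$, $\nu_3^\#(p)$ stay as written, giving~\eqref{Bkp} directly. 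For $N=pq$ with $p\ne q$: $\phi(pq)=(p-1)(q-1)$, $\delta_1(pq)=0$, $\mu(pq)=1$, and multiplicativity gives $\nu_2^\#(pq)=\nu_2^\#(p)\nu_2^\#(q)$ and $\nu_3^\#(pq)=\nu_3^\#(p)\nu_3^\#(q)$, yielding~\eqref{Bkpq}.

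There is essentially no obstacle here — the corollary is a bookkeeping consequence of Proposition~\ref{B prop}. The only place demanding care is the justification that $\nu_\infty^\#(N)=\delta_1(N)$ for squarefree~$N$, which hinges on combining multiplicativity with property~(b), and the trivial but worth-stating arithmetic identity $\frac{k-1}{12}-\frac12=\frac{k-7}{12}$ used in~\eqref{Bk1}. I would present the derivation of~\eqref{BkN sfree} in one or two sentences and then list the three specializations, since each is obtained by substituting known elementary values ($\phi$, $\mu$, $\delta_1$, and the value~$1$ of a multiplicative function at prime powers' absence) into that single formula.
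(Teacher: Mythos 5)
Your proposal is correct and matches the paper's (one-sentence) proof exactly: both derive \eqref{BkN sfree} by specializing Proposition~\ref{B prop} to squarefree $N$ via multiplicativity and properties (a)--(b), then read off the three special cases by substitution. Your extra care in justifying $\nu_\infty^\#(N)=\delta_1(N)$ on squarefree integers (including the convention $\nu_\infty^\#(1)=1$) is a welcome elaboration of what the paper leaves implicit.
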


\begin{proof}
These identities are direct consequences of Proposition~\ref{B prop}, using the assumption that $N$ is squarefree and the fact that $\nu_{\infty}^{\#}$, $\nu_2^{\#}$, and $\nu_3^{\#}$ are multiplicative functions.
\end{proof}

\begin{cor} \label{vanishers}
Let $k$ be a positive even integer. For any prime $p$, we have $B(k,p)>0$, except for the pairs
\[
(k,p) = (2,2),\, (2,3),\, (2,5),\, (2,7),\, (2,13),\, (4,2),\, (4,3),\, (6,2),\, \text{or } (12,2)
\]
for which $B(k,p)=0$. Similarly, for any distinct primes $p$ and $q$, we have $B(k,pq)>0$, except that $B(2,6) = B(2,10) = B(2,22) = 0$.
\end{cor}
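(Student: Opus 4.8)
The plan is to use the explicit formulas \eqref{Bkp} and \eqref{Bkpq} from Corollary~\ref{gstar sfree prop}, together with the control on the sizes of the correction terms provided by property~(3) of Proposition~\ref{B prop}, to reduce each statement to a finite computation. For the prime case, property~(4) gives $\nu_2^\#(p) = \legendre{-4}p - 1 \in \{-2, 0\}$ and $\nu_3^\#(p) = \legendre{-3}p - 1 \in \{-2, 0\}$, so that the correction terms satisfy $|c_2(k)\nu_2^\#(p)| \le \tfrac12$, $|c_3(k)\nu_3^\#(p)| \le \tfrac23$, and $|\delta_2(k)| \le 1$; hence from \eqref{Bkp} we get the clean lower bound $B(k,p) \ge \tfrac{(k-1)(p-1)}{12} - \tfrac{13}{6}$. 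This is positive as soon as $(k-1)(p-1) > 26$, and since $k\ge2$ and $p$ is prime, only finitely many pairs $(k,p)$ remain (those with $p-1 \le 26$ and $k$ not too large, i.e. $k \le 27$ when $p=2$, and fewer choices of $k$ for each larger $p$). Each of these finitely many pairs is then checked directly against the exact formula \eqref{Bkp} to determine whether $B(k,p)=0$, yielding the listed exceptions.

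For the case of two distinct primes $p$ and $q$, the same idea applies but with more slack: $\nu_2^\#(p)\nu_2^\#(q) \in \{0, 4\}$ and $\nu_3^\#(p)\nu_3^\#(q) \in \{0, 4\}$ by property~(4), so from \eqref{Bkpq} we obtain $B(k,pq) \ge \tfrac{(k-1)(p-1)(q-1)}{12} - 1 - 1 - 1 = \tfrac{(k-1)(p-1)(q-1)}{12} - \tfrac{7}{3}$ (using $|c_2(k)\cdot 4| = 1$, $|c_3(k)\cdot 4| \le \tfrac43$, $|\delta_2(k)| \le 1$, and noting the precise constants can be tracked carefully). This is positive once $(k-1)(p-1)(q-1) > 28$, which forces $k=2$ and $\{p,q\}$ among a short list of small pairs (since $(p-1)(q-1) \le 28$ forces $pq \in \{6, 10, 14, 15, 21, 22, 26, \ldots\}$, a handful of values). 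Evaluating \eqref{Bkpq} at $k=2$ on each such $pq$ (where $\delta_2(2)=1$, $c_2(2) = -\tfrac14$, $c_3(2) = -\tfrac13$) then identifies exactly $N = 6, 10, 22$ as the cases where $B(2,pq)=0$.

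The main obstacle is purely bookkeeping rather than conceptual: one must be slightly careful about the exact constant in the lower bound, since rounding $|c_3(k)\nu_3^\#(p)| \le \tfrac23$ up to a larger value would enlarge the finite search range unnecessarily, while rounding it down would be incorrect. The cleanest route is to observe that $c_2(k)$ and $c_3(k)$ are simultaneously nonzero only in congruence classes of $k$ modulo $12$, and to just bound each term by its actual maximum, then enumerate. I would present the finite check as a short table or an inline list of evaluated values; there is no subtlety in the individual evaluations, only in making sure the reduction range is tight enough that the list of remaining cases is as short as claimed. Everything else follows immediately from Corollary~\ref{gstar sfree prop} and Proposition~\ref{B prop}.
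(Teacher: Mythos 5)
Your approach is the same as the paper's: bound the correction terms in \eqref{Bkp} and \eqref{Bkpq} uniformly and reduce to a finite check. The prime case is handled exactly as in the paper (threshold $(k-1)(p-1)>26$, then direct evaluation). In the two-prime case, however, there are two slips. First, the term bounds you state, $|c_2(k)\,\nu_2^\#(p)\nu_2^\#(q)|\le 1$, $|c_3(k)\,\nu_3^\#(p)\nu_3^\#(q)|\le \frac43$, and $|\delta_2(k)|\le 1$, sum to $\frac{10}{3}$, not $\frac{7}{3}$; the correct threshold is therefore $(k-1)(p-1)(q-1)>40$ (this is the paper's constant), not $>28$. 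Second, and more substantively, neither threshold ``forces $k=2$'': for example $(k,pq)=(4,6)$ gives $(k-1)(p-1)(q-1)=6$, and cases such as $(4,10)$, $(4,15)$, $(6,6)$, $(10,6)$ all survive the reduction and must be checked individually. They all turn out to satisfy $B(k,pq)>0$ (e.g.\ $B(4,6)=1$), so the corollary's list of exceptions is unaffected, but as written your finite verification omits these cases and is therefore incomplete. A very minor further point: $\nu_2^\#(2)=-1$ and $\nu_3^\#(3)=-1$, so $\nu_2^\#(p)\in\{0,-1,-2\}$ rather than $\{0,-2\}$, and the products $\nu_2^\#(p)\nu_2^\#(q)$, $\nu_3^\#(p)\nu_3^\#(q)$ can also equal $2$; the absolute bounds you use remain valid, so this does not affect the argument.
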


\begin{proof}
Since $|c_2(k) \nu_2^\#(p) + c_3(k) \nu_3^\#(p) - \delta_2(k)| \le \frac14\cdot2+\frac13\cdot2+1=\frac{13}6$ by Proposition~\ref{B prop}(d), the inequality $B(k,p)>0$ follows from equation~\eqref{Bkp} when $\frac{(k-1)(p-1)}{12} > \frac{13}6$, or equivalently when $(k-1)(p-1) > 26$; the finitely many values remaining can be computed individually. Similarly, from equation~\eqref{Bkpq} we see that $\big| B(k,pq) - \frac{(k-1)(p-1)(q-1)}{12} \big| \le \big( \frac14\cdot4+\frac13\cdot4+1 \big) = \frac{10}3$; therefore $B(k,pq)$ is positive when $(k-1)(p-1)(q-1) > 40$, and the remaining values can be checked individually.
\end{proof}

We remark that these values can also be found on the LMFDB, an online database of information about specific $L$-functions, modular forms, and related objects: \url{http://www.lmfdb.org/ModularForm/GL2/Q/holomorphic}

The last fact we need about the function $B(k,N)$ is its relationship to $A(k,N)$.
The isomorphism classes of automorphic representations associated with the space of weight-$k$ cusp forms on~$\Gamma_0(N)$ are in one-to-one correspondence with weight-$k$ Hecke newforms on~$\Gamma_0(d)$ as $d$ ranges over the positive divisors of~$N$. In particular, comparing the cardinalities of these sets results in the well-known convolution formula (see~\cite[first displayed equation on page 311]{M})
\begin{equation} \label{ABrel}
A(k,N) = \sum_{d\mid N} B(k,d).
\end{equation}
In particular, if $N$ is prime then by Definition~\ref{H def} and Theorem~\ref{main thm} (since all primes are squarefree),
\begin{align*}
H(k,N) = G(k,N) - B(k,1) &= A(k,N) - B(k,1) \\
&= \big( B(k,N) + B(k,1) \big) - B(k,1) = B(k,N),
\end{align*}
which establishes the first assertion of Theorem~\ref{prm test thm}. The remaining assertions of Theorem~\ref{prm test thm} are established in the following four lemmas.

\begin{lemma} \label{nsf lemma}
Let $k\ge2$ be an even integer. If $N$ is a positive integer that is not squarefree, then $H(k,N)>B(k,N)$, except that $H(2,4)<B(2,4)$ and $H(2,9)=B(2,9)$.
\end{lemma}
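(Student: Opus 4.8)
The plan is to mimic the strategy used for Theorem~\ref{main thm}, but working with the difference $H(k,N)-B(k,N)$ instead of $\Delta(k,N)=G(k,N)-A(k,N)$. First I would use the convolution relation~\eqref{ABrel} together with Definition~\ref{H def} to write
\[
H(k,N) - B(k,N) = \big(G(k,N) - B(k,1)\big) - B(k,N) = A(k,N) - \sum_{\substack{d\mid N\\ d<N}} B(k,d) - \Delta(k,N) - B(k,1),
\]
and then, since $A(k,N) = \sum_{d\mid N} B(k,d)$, this collapses to
\[
H(k,N) - B(k,N) = \sum_{\substack{d\mid N,\ d>1\\ d<N}} B(k,d) - \Delta(k,N).
\]
So the lemma reduces to the inequality $\sum_{1<d<N,\ d\mid N} B(k,d) > \Delta(k,N)$, with the stated exceptions.

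Next I would bound the two sides. For the sum, since $N$ is not squarefree there is a prime $p$ with $p^2\mid N$; I would isolate a convenient set of proper divisors $d$ of $N$ with $1<d<N$ on which $B(k,d)$ can be bounded below. When $N$ is not prime and not a prime square, the divisors $p$ and $p^2$ (or $p$ and another prime $q$, etc.) are all strictly between $1$ and $N$, and Corollary~\ref{vanishers} together with the formula~\eqref{Bkp} gives $B(k,p)>0$ except for a short explicit list of small pairs. More usefully, equation~\eqref{Bkp} shows $B(k,p)$ grows like $\tfrac{(k-1)(p-1)}{12}$, so for large $p$ the single term $B(k,p)$ already dominates. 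For the right-hand side I would invoke Lemma~\ref{Deltakn ineq lemma} in the form $\Delta(k,N)\le$ (an expression involving $s_0^*$, $\nu_\infty^*$), or more simply re-run the case analysis of Lemmas~\ref{5 and up lemma}--\ref{2 lemma}: when $p\ge5$, $\Delta(k,N)$ is controlled by $\tfrac12\nu_\infty^*(N)$ up to a bounded error, and $\nu_\infty^*(N)=\phi(D)$ where $D^2$ is the largest square dividing $N$; this competes against $B(k,p)\asymp (k-1)(p-1)/12$, and for $p\ge5$, $k\ge2$ the divisor sum wins once $N$ is not too small.

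The structure of the argument would then be three or four sub-lemmas paralleling the proof of Theorem~\ref{main thm}: (i) if some prime $p\ge5$ has $p^2\mid N$, handle it using that $B(k,p)$ grows linearly in $p$ while $\Delta(k,N)$ is essentially $\tfrac12\phi(D)$; since $D\ge p$ one must check that the $B(k,d)$-sum over $d\in\{p,p^2,\dots\}$ beats $\tfrac12\phi(D)$, which follows because $B(k,p^j)$ grows like $\phi(p^j)(k-1)/12$ and these dominate $\tfrac12\phi(D)$ when $D\mid N$ and $N$ has the relevant shape — a short computation reduces this to finitely many small $N$; (ii) the case $9\mid N$; (iii) the case $4\mid N$, where the genuine exceptions $H(2,4)<B(2,4)$ and $H(2,9)=B(2,9)$ emerge (note the exception $N=9$ here, matching $\Delta(2,9)=0$, and $N=4$ matching $\Delta(2,4)=-\tfrac12$), together with a finite list of small $N$ with $(k-1)N$ bounded that one verifies directly — these are exactly the cases $k=2$, $N\in\{6,10,14,15,\dots,91\}$ and $(k,N)=(4,6)$ appearing in Theorem~\ref{prm test thm} but which are \emph{squarefree}, so they do not arise in this lemma; here only the non-squarefree small exceptions $N=4,9$ survive.

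The main obstacle I anticipate is bookkeeping the competition between the \emph{sum} $\sum_{1<d<N} B(k,d)$ and $\Delta(k,N)$ in the case where $N$ is a pure prime power $p^e$ with $e\ge2$: there the proper divisors are $p,p^2,\dots,p^{e-1}$, the $B(k,p^j)$ are individually positive (for $p^j$ not on the tiny exceptional list) but each is only of size $\asymp \phi(p^j)(k-1)/12$, while $\Delta(k,p^e)$ can be as large as $\asymp \tfrac{k-1}{12}p^e(1-s_0^*(p^e)) \asymp \tfrac{k-1}{12}p^{e-2}$ plus $\tfrac12\phi(p^{\lfloor e/2\rfloor})$. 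Comparing $\sum_{j<e}\phi(p^j)$ against $p^{e-2}$ shows the sum still wins (the geometric sum $\sum_{j<e}\phi(p^j) = p^{e-1}-1 > p^{e-2}$ for $p\ge2$, $e\ge3$, and the edge case $e=2$ is the genuinely delicate one handled by $p\ge5$ or by direct check for $p\in\{2,3\}$). So the real work is a careful but elementary finite verification for the handful of small $N$ and the clean inequality $p^{e-1}-1 > \tfrac{1}{1}\cdot(\text{stuff of size }p^{e-2})$ for the tail; I would organize this so that after reducing to $(k-1)N$ bounded by an explicit constant (on the order of a few dozen), a short enumeration finishes the proof.
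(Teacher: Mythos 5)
There is a fatal sign error in your opening reduction. Since $\Delta(k,N)=G(k,N)-A(k,N)$, i.e.\ $G(k,N)=A(k,N)+\Delta(k,N)$, the convolution~\eqref{ABrel} gives
\[
H(k,N)-B(k,N) \;=\; \Delta(k,N)\;+\sum_{\substack{d\mid N\\ 1<d<N}} B(k,d),
\]
with a \emph{plus} sign on $\Delta(k,N)$, not the minus sign you wrote. The inequality your reduction asks for, $\sum_{1<d<N,\,d\mid N}B(k,d)>\Delta(k,N)$, is in fact false in general: take $N=25$, $k=2$. Then the only nontrivial proper divisor is $5$, and $B(2,5)=0$ by Corollary~\ref{vanishers}, while a direct computation from~\eqref{Deltakn formula} gives $\Delta(2,25)=\tfrac1{12}+\tfrac32-\tfrac14-\tfrac13=1>0$. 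So the entire program of pitting the divisor sum against $\Delta(k,N)$ — including the prime-power analysis in your last paragraph — cannot be carried out; indeed your own estimate $B(k,p)\asymp(k-1)(p-1)/12$ versus $\Delta(k,p^2)\approx\tfrac12(p-1)+\cdots$ already shows the single term $B(2,p)$ loses to $\Delta(2,p^2)$ for every large prime~$p$.

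With the correct sign the lemma is essentially immediate, and this is the paper's proof: both summands on the right-hand side above are nonnegative ($B(k,d)$ is a dimension, and $\Delta(k,N)>0$ for non-squarefree $N$ by Theorem~\ref{main thm}, apart from the two exceptions $\Delta(2,9)=0$ and $\Delta(2,4)=-\tfrac12$). Hence $H(k,N)-B(k,N)\ge\Delta(k,N)>0$ whenever $N$ is not squarefree and $(k,N)\notin\{(2,4),(2,9)\}$, and the two exceptional values $H(2,4)=-\tfrac12$, $B(2,4)=0$, $H(2,9)=B(2,9)=0$ are checked directly. No case analysis on the prime whose square divides $N$, and no comparison of growth rates, is needed. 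The long list of squarefree exceptions from Theorem~\ref{prm test thm} that you mention belongs to Lemmas~\ref{sf lemma} and~\ref{sf 2 lemma}, not here.
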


\begin{proof}
By Definition~\ref{H def} and Theorem~\ref{main thm}, as long as $(k,N) \ne (2,4)$ and $(k,N) \ne (2,9)$ we have
\[
H(k,N) = G(k,N) - B(k,1) > A(k,N) - B(k,1) = \sum_{\substack{d\mid N \\ d>1}} B(k,d) \ge B(k,N)
\]
by equation~\eqref{ABrel},
where the last inequality is valid since $N>1$ and the dimensions $B(k,d)$ are nonnegative. On the other hand, the values $H(2,4)=-\frac{1}{2}$ and $B(2,4)=H(2,9)=B(2,9)=0$ can be calculated directly.
\end{proof}

\begin{lemma} \label{proper divisor lemma}
Let $N\ge2$ be a squarefree positive integer and $k\ge2$ an even integer. If there exists a nontrivial divisor $d_0$ of $N$ (that is, $1<d_0<N$ and $d_0\mid N$) such that $B(k,d_0)>0$, then $H(k,N)>B(k,N)$.
\end{lemma}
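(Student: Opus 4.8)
The plan is to exploit the convolution formula~\eqref{ABrel} together with the nonnegativity of the dimensions $B(k,d)$. Since $N$ is squarefree, every divisor $d$ of $N$ is also squarefree, so Theorem~\ref{main thm} applies to each such $d$ and gives $G(k,N)=A(k,N)$ (the exceptional cases $(k,N)=(2,4)$ and $(2,9)$ cannot occur here because $4$ and $9$ are not squarefree). First I would write
\[
H(k,N) = G(k,N) - B(k,1) = A(k,N) - B(k,1) = \sum_{\substack{d\mid N \\ d>1}} B(k,d),
\]
using Definition~\ref{H def} and equation~\eqref{ABrel} to peel off the $d=1$ term.

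Next I would isolate the two divisors of $N$ that are guaranteed to appear in that sum: $d=N$ itself and $d=d_0$, the hypothesized nontrivial divisor. Since $1<d_0<N$, these are distinct divisors, both exceeding $1$, so
\[
H(k,N) = \sum_{\substack{d\mid N \\ d>1}} B(k,d) \ge B(k,N) + B(k,d_0),
\]
where I have discarded the remaining (nonnegative) terms $B(k,d)$. The hypothesis $B(k,d_0)>0$ then immediately yields $H(k,N) > B(k,N)$, which is the claim.

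This argument is essentially immediate once the pieces are in place, so there is no real obstacle; the only thing to be careful about is confirming that Theorem~\ref{main thm} genuinely applies to all divisors of $N$ (which it does, since squarefreeness is inherited by divisors) and that $d_0$ and $N$ really are two distinct terms in the sum (which follows from $1<d_0<N$). One might also note in passing that this lemma is exactly the tool that will let later lemmas reduce the primality characterization to understanding when $B(k,d)>0$ for the small squarefree $d$, via Corollaries~\ref{vanishers} and~\ref{gstar sfree prop}.
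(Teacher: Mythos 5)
Your proof is correct and follows essentially the same route as the paper's: apply Theorem~\ref{main thm} to the squarefree $N$ to get $H(k,N)=A(k,N)-B(k,1)$, expand via the convolution formula~\eqref{ABrel}, and keep the two distinct terms $B(k,N)$ and $B(k,d_0)$ while discarding the other nonnegative summands. (The only superfluous remark is that Theorem~\ref{main thm} needs to be invoked only for $N$ itself, not for every divisor of $N$.)
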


\begin{proof}
By Definition~\ref{H def} and Theorem~\ref{main thm},
\begin{align*}
H(k,N) = G(k,N) - B(k,1) &= A(k,N) - B(k,1) \\
&= \sum_{\substack{d\mid N \\ d>1}} B(k,d) \ge B(k,N) + B(k,d_0) > B(k,N)
\end{align*}
by equation~\eqref{ABrel}, again since the dimensions $B(k,d)$ are nonnegative.
\end{proof}

\begin{lemma} \label{sf lemma}
Let $N\ge2$ be a squarefree positive integer and $k\ge4$ an even integer. If $N$ is not prime, then $H(k,N)>B(k,N)$, except that $H(4,6)=B(4,6)$.
\end{lemma}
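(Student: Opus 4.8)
The plan is to use the convolution formula~\eqref{ABrel} exactly as in Lemma~\ref{proper divisor lemma}: since $N$ is squarefree and not prime, it has a nontrivial divisor, and by Lemma~\ref{proper divisor lemma} it suffices to exhibit \emph{one} nontrivial divisor $d_0$ of $N$ (so $1<d_0<N$) with $B(k,d_0)>0$. Because $k\ge4$, Corollary~\ref{vanishers} shows that the only primes $p$ with $B(k,p)=0$ are $(k,p)\in\{(4,2),(4,3),(6,2),(12,2)\}$, and there are \emph{no} pairs of distinct primes with $B(k,pq)=0$ when $k\ge4$. So the strategy is: look at the prime factorization $N=p_1\cdots p_r$ with $r\ge2$, and try to pick $d_0$ to be either one of the primes $p_i$ or a product $p_ip_j$ of two of them, chosen to avoid the finite list of exceptional pairs.

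First I would treat the case $r\ge3$: then $N$ has a divisor $d_0=p_ip_j$ that is a product of two distinct primes and is a proper divisor (since $r\ge3$ forces $p_ip_j<N$), and $B(k,p_ip_j)>0$ by Corollary~\ref{vanishers} since $k\ge4$. So Lemma~\ref{proper divisor lemma} applies and $H(k,N)>B(k,N)$. Next, the case $r=2$, say $N=pq$ with $p<q$. Here the only nontrivial divisors are $p$ and $q$. If either $B(k,p)>0$ or $B(k,q)>0$ we are done by Lemma~\ref{proper divisor lemma}, so the only way to fail is if $B(k,p)=B(k,q)=0$, which by Corollary~\ref{vanishers} (with $k\ge4$) forces $\{p,q\}\subseteq\{2,3\}$ with the appropriate $k$: the only possibility is $k=4$ and $\{p,q\}=\{2,3\}$, i.e.\ $N=6$. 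This is precisely the advertised exception, and the equality $H(4,6)=B(4,6)$ can be verified by direct computation using Proposition~\ref{B prop} (or by consulting the LMFDB). In all other $r=2$ cases we have found a good $d_0$, so $H(k,N)>B(k,N)$.

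I do not expect any real obstacle here; the lemma is essentially a bookkeeping consequence of Corollary~\ref{vanishers} and Lemma~\ref{proper divisor lemma}. The only point requiring a little care is making sure the exceptional list in Corollary~\ref{vanishers} is correctly intersected with the hypothesis $k\ge4$: the pairs $(2,p)$ and the triple $(2,6),(2,10),(2,22)$ all drop out, leaving only $(4,2),(4,3),(6,2),(12,2)$ among primes and nothing among products of two primes, which is why the sole surviving exception is $N=6$ with $k=4$ (where both prime divisors $2$ and $3$ are "bad" for $k=4$). One should also double-check that when $N=6$ and $k=4$ there is genuinely equality rather than the reverse inequality — this is a one-line numerical check.
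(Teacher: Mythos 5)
Your proposal is correct and follows essentially the same route as the paper: combine Lemma~\ref{proper divisor lemma} with the list of vanishing cases in Corollary~\ref{vanishers} to produce a nontrivial divisor $d_0$ with $B(k,d_0)>0$, isolating $k=4$, $N=6$ as the sole exception, which is checked directly. The paper simply takes $d_0$ to be the larger prime divisor $q$ in all cases rather than splitting on the number of prime factors, but this is an immaterial difference.
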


\begin{proof} 
Since $N$ is squarefree and not prime, there exist distinct primes $p<q$ dividing~$N$. By Corollary~\ref{vanishers} we know that $B(k,q)$ is positive, in which case $H(k,N)>B(k,N)$ by Lemma~\ref{proper divisor lemma} with $d_0=q$. The only exception to this argument is in the case $k=4$ and $N=6$, where both $B(4,2)$ and $B(4,3)$ vanish; we verify directly that $H(4,6)=1=B(4,6)$.
\end{proof}

\begin{lemma} \label{sf 2 lemma}
Let $N\ge2$ be a squarefree positive integer. If $N$ is not prime, then $H(2,N)>B(2,N)$ unless $N=6$, $10$, $14$, $15$, $21$, $26$, $35$, $39$, $65$, or~$91$, in which case $H(2,N)=B(2,N)$.
\end{lemma}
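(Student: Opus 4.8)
The plan is to mimic the proof of Lemma~\ref{sf lemma}, accounting for the fact that at weight $k=2$ more of the quantities $B(2,p)$ vanish. First I would record the basic identity: since $N$ is squarefree, Definition~\ref{H def}, Theorem~\ref{main thm}, and the convolution formula~\eqref{ABrel} give
\[
H(2,N) = G(2,N) - B(2,1) = A(2,N) - B(2,1) = \sum_{\substack{d\mid N\\ d>1}} B(2,d),
\]
so that $H(2,N) - B(2,N) = \sum_{\substack{d\mid N\\ 1<d<N}} B(2,d)$, a sum of nonnegative dimensions over the nontrivial proper divisors of $N$. Thus $H(2,N) > B(2,N)$ as soon as $N$ has a nontrivial proper divisor $d_0$ with $B(2,d_0)>0$ (this is exactly Lemma~\ref{proper divisor lemma}), while $H(2,N) = B(2,N)$ precisely when $B(2,d)=0$ for all such $d$.

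Next I would split on the number $\omega(N)$ of distinct prime factors. If $\omega(N)\ge3$, I want to exhibit a nontrivial proper divisor of the form $d_0 = pq$ with $p,q$ distinct primes dividing $N$ and $B(2,pq)>0$. By Corollary~\ref{vanishers} the only semiprimes with $B(2,pq)=0$ are $pq\in\{6,10,22\}$, each of which is even; hence it suffices to choose $p$ and $q$ both odd, which is possible because $N$ has at least two odd prime factors (at most one prime factor of $N$ equals $2$). Since $\omega(N)\ge3$, this $d_0=pq$ is a nontrivial proper divisor, so Lemma~\ref{proper divisor lemma} yields $H(2,N)>B(2,N)$.

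Finally I would treat the case $\omega(N)=2$, say $N=pq$ with $p<q$ prime. Here the nontrivial proper divisors are exactly $p$ and $q$, so the identity above reads $H(2,N)-B(2,N) = B(2,p) + B(2,q)$. By Corollary~\ref{vanishers}, $B(2,p)=0$ if and only if $p\in\{2,3,5,7,13\}$, and likewise for $q$; therefore $H(2,N)=B(2,N)$ precisely when both $p,q\in\{2,3,5,7,13\}$, and $H(2,N)>B(2,N)$ otherwise. Enumerating the products of two distinct primes drawn from $\{2,3,5,7,13\}$ produces exactly $\{6,10,14,15,21,26,35,39,65,91\}$, the list in the statement, which completes the proof. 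I do not anticipate a genuine obstacle: the only mild care needed is the observation that the three exceptional pairs $\{2,3\}$, $\{2,5\}$, $\{2,11\}$ in Corollary~\ref{vanishers} each contain the prime $2$, which is what makes the $\omega(N)\ge3$ case go through cleanly, together with the bookkeeping of the ten exceptional semiprimes.
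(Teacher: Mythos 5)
Your proposal is correct and follows essentially the same route as the paper: split on $\omega(N)\ge3$ versus $\omega(N)=2$, use two odd primes to dodge the exceptional semiprimes $6,10,22$ in the first case, and reduce the second case to whether $p,q\in\{2,3,5,7,13\}$ via Corollary~\ref{vanishers} and Lemma~\ref{proper divisor lemma}. The only (harmless, and slightly cleaner) difference is that you deduce the ten equalities directly from $H(2,pq)-B(2,pq)=B(2,p)+B(2,q)=0$, whereas the paper verifies them by direct computation.
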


\begin{proof} 
First suppose that $N$ has at least three distinct prime factors. Then at least two of these primes $p,q$ must be odd; let $d_0=pq$ be their product, which in particular is not equal to $6$, $10$, or~$22$. By Corollary~\ref{vanishers}, we see that $B(k,d_0)>0$, and therefore $H(2,N)>B(2,N)$ by Lemma~\ref{proper divisor lemma}.

The only remaining case is when $N=pq$ is the product of two distinct primes. If one of these primes is not in the set $\{2,3,5,7,13\}$, then let $d_0$ be that prime; by Corollary~\ref{vanishers}, we see that $B(k,d_0)>0$, and therefore $H(2,N)>B(2,N)$ by Lemma~\ref{proper divisor lemma}. Otherwise, both $p$ and $q$ are in the set $\{2,3,5,7,13\}$, which leads to the $\binom52=10$ values of $N$ listed in the statement of the lemma; and direct computation verifies that $H(2,N)=B(2,N)$ in all ten cases.
\end{proof}

The proof of Theorem~\ref{prm test thm} is now complete.

\section{Full factorization} \label{fact N section}

The last result that remains to be established is Theorem~\ref{3 inputs thm}, on factorization the integer $N$ completely using two values of $A(k,N)$ and one value of $B(k,N)$. Fortunately, after the work in the earlier sections, the proof of this theorem is quite brief.

\begin{lemma} \label{fact N lemma}
Let $k$ be a positive even integer. Let $E$ be a positive squarefree integer and $L$ a positive squarefull number such that $\gcd(E,L)=1$, and set $N=EL$. Suppose that we know the value $B(k,N)$, the complete factorization of $L$, and the values $\nu_2^{\#}(N)$, $\nu_3^{\#}(N)$, and $\mu(N)$. Then we can find the complete factorization of $E$ (and hence of $N$) in probabilistic polynomial time.
\end{lemma}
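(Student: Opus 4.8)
The plan is to use the single identity \eqref{B formula} for $B(k,N)$ to solve for $\phi(E)$, and then to invoke the same cryptographic fact---factoring an integer from a multiple of its totient---that powered Lemma~\ref{factor squarefull lemma}. First I would dispose of the degenerate case $E=1$, in which $N=L$ is already completely factored, and similarly note that $L=1$ makes $N=E$ squarefree; so assume $E>1$.

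The key observation is that the squarefree factor $E$ interacts very simply with the two ``hard'' multiplicative functions in \eqref{B formula}. On one hand, since $\nu_\infty^\#$ is multiplicative with $\nu_\infty^\#(p)=0$ for every prime $p$ by Proposition~\ref{B prop}(b), and $E>1$ is squarefree, we get $\nu_\infty^\#(E)=0$, hence $\nu_\infty^\#(N)=\nu_\infty^\#(E)\nu_\infty^\#(L)=0$, and the entire $\nu_\infty^\#$-term of \eqref{B formula} drops out. On the other hand, by Proposition~\ref{B prop}(a) we have $E\,s_0^\#(E)=\phi(E)$, so by multiplicativity $N s_0^\#(N)=EL\,s_0^\#(E)s_0^\#(L)=\phi(E)\cdot L s_0^\#(L)$. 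Now every quantity in \eqref{B formula} other than $Ns_0^\#(N)$ is known: we are given $B(k,N)$, $\nu_2^\#(N)$, $\nu_3^\#(N)$, $\mu(N)$, and $k$ (hence $c_2(k)$, $c_3(k)$, $\delta_2(k)$), so since $k-1\ge1$ I can solve the resulting linear equation for
\[
N s_0^\#(N)=\frac{12}{k-1}\Big(B(k,N)-c_2(k)\nu_2^\#(N)-c_3(k)\nu_3^\#(N)-\delta_2(k)\mu(N)\Big).
\]
Because the complete factorization of $L$ is known, I can compute $L s_0^\#(L)$ directly from \cite[Definition~1$'$]{M}; it is a positive rational (positivity being visible from the product definition there, or from the fact that the dimension $B(k,L)$ grows linearly in $k$ with leading coefficient $\tfrac1{12}L s_0^\#(L)$ when $L>1$). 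Dividing, I recover $\phi(E)=N s_0^\#(N)\big/\big(L s_0^\#(L)\big)$.

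Finally, $E=N/L$ is itself known and we now also know $\phi(E)$, so the standard probabilistic polynomial-time algorithm that factors an integer given a multiple of its totient (well known to cryptographers; see \cite[Section~10.4]{S}, as already used in Lemma~\ref{factor squarefull lemma}) produces the complete factorization of~$E$. Combined with the given factorization of $L$ and the coprimality $\gcd(E,L)=1$, this yields the complete factorization of $N$. Every step except the final factoring of $E$ is deterministic arithmetic on integers and rationals of size polynomial in $N$, so the whole procedure runs in probabilistic polynomial time.

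I do not expect a serious obstacle: the content lies entirely in the remark that $\nu_\infty^\#$ annihilates the squarefree part while $s_0^\#$ reduces to $\phi(E)/E$ on it, collapsing \eqref{B formula} to a single linear equation in the unknown $\phi(E)$. The only points needing a moment's care are the legitimacy of dividing by $L s_0^\#(L)$ (i.e.\ that $s_0^\#(L)\ne0$), the bookkeeping that bounds the sizes of the rationals that appear, and the handling of the degenerate cases $E=1$ and $L=1$.
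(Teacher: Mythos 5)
Your proposal is correct and follows essentially the same route as the paper's proof: kill the $\nu_\infty^\#$ term using $\nu_\infty^\#(p)=0$ and $E>1$ squarefree, solve the formula~\eqref{B formula} linearly for $Ns_0^\#(N)$, divide by the computable quantity $Ls_0^\#(L)$ to get $Es_0^\#(E)=\phi(E)$, and then factor $E$ from $E$ and $\phi(E)$ by the standard probabilistic algorithm. The extra care you flag (nonvanishing of $s_0^\#(L)$, size of the rationals) is reasonable but does not change the argument.
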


\begin{proof} 
Since the case $E=1$ is trivial, we may assume that $E>1$. In this case, there exists at least one prime that divides $N$ to the first power, which gives $\nu_{\infty}^{\#}(N)=0$ by Proposition~\ref{B prop}(b). Then, by equation~\eqref{B formula},
\[
Ns_0^{\#}(N) = \frac{12}{k-1} \bigg( B(k,N) - c_2(k)\nu_2^{\#}(N) - c_3(k)\nu_3^{\#}(N) - \delta_2(k) \mu(N) \bigg),
\]
where every term on the right-hand side is known by assumption. We can then compute $Es_0^{\#}(E)=Ns_0^{\#}(N)/Ls_0^{\#}(L)$ from the known value $Ns_0^{\#}(N)$ and directly from the definition of $Ls_0^{\#}(L)$. On the other hand, since $E$ is squarefree, Proposition~\ref{B prop}(a) tells us that $Es_0^{\#}(E)=\phi(E)$. Therefore we know the values $E=N/L$ and $\phi(E)$, and hence we can factor $E$ in probabilistic polynomial time (as mentioned in the proof of Lemma~\ref{factor squarefull lemma}).
\end{proof}

\begin{proof}[Proof of Theorem~\ref{3 inputs thm}]
From the two values $A(k_1,N)$ and $A(k_2,N)$, we can obtain (by Theorem~\ref{2 inputs thm}) the factorization of $N=EL$ into its squarefree and squarefull parts, along with the complete factorization of $L$, in probabilistic polynomial time. Since we also know the value $B(k,N)$, the only obstacle to applying Lemma~\ref{fact N lemma} is our ignorance of the values $\nu_2^{\#}(N)$, $\nu_3^{\#}(N)$, and $\mu(N)$.

However, by Proposition~\ref{B prop}(c), the only possible values for $\nu_2^{\#}(N)$ and $\nu_3^{\#}(N)$ are $0$ or $\pm 2^\ell$ for some integer $2^\ell \le N$, and of course there are only three possible values for $\mu(N)$. Thus there are $\ll(\log N)^2$ possible values for the triple $\big( \nu_2^{\#}(N), \nu_3^{\#}(N),\mu(N) \big)$. We simply use each of these possible values (in parallel or in turn) and attempt to apply Lemma~\ref{fact N lemma}. The computations using incorrect values might result in errors or infinite loops or even incorrect factorizations; but the computation using the correct values will (with high probability) yield the correct factorization---and any proposed factorization can be quickly checked for correctness. Therefore these many computations will indeed yield the factorization of $N$ in probabilistic polynomial time.
\end{proof}

Though we have made no attempt to optimize the running time of the algorithms presented in this paper, it is natural to speculate how one could avoid the need for $O(\log^2N)$ parallel computations in this last proof. Morally speaking, from four values $B(k_j,N)$ for distinct even numbers $k_1,\dots,k_4$, we can use linear algebra to try to solve the system of four equations arising from the formula~\eqref{B formula} for the four unknown values $Ns_0^{\#}(N)$, $\nu_2^{\#}(N)$, $\nu_3^{\#}(N)$, and $\mu(N)$. Indeed, we should even be able to manage with three values $B(k_j,N)$, since the term $\delta_2(k) \mu(N)$ disappears when $L>1$ (or when $k>2$), and there are only three possible values for $\delta_2(k) \mu(N)$ in any event.

While this approach does work for many triples of weights $k_1,k_2,k_3$, it does not work in every possible circumstance, because the resulting system of three linear equations in $Ns_0^{\#}(N)$, $\nu_2^{\#}(N)$, and $\nu_3^{\#}(N)$ might be degenerate. For example, if $(k_1,k_2,k_3) = (12m, 12m+4, 12m+8)$, then any one value among $B(k_1,N),B(k_2,N),B(k_3,N)$ can be calculated from the other two (the middle value is the average of the first and last values, for instance). On the other hand, for certain pairs of weights---for example, if $k_1\equiv k_2\mod{12}$ and $k_1,k_2>2$---it is possible to solve for the crucial value $Ns_0^{\#}(N)$, even though the individual values $\nu_2^{\#}(N)$ and $\nu_3^{\#}(N)$ are still entangled with each other. In practice, from the explicit formula~\eqref{B formula}, it is an easy matter to determine how much information can be extracted from the values $B(k_j,N)$ for any given weights~$k_j$.

\red{\section*{Acknowledgments}}

\red{The authors thank the anonymous referee for several helpful comments, and particularly for pointing out the connection to primality tests that led to Theorem~\ref{prm test thm}. The second author's work is partially supported by a National Sciences and Engineering Research Council of Canada Discovery Grant.}


\begin{thebibliography}{99}

\bibitem{AKS}
Agrawal, Manindra; Kayal, Neeraj; Saxena, Nitin, PRIMES is in P, {\em Annals of Mathematics} 160 (2004), no. 2, 781--793.

\bibitem{BW}
Bressoud, David; Wagon, Stan, {\em A course in computational number theory}, Key College Pub., 2000.

\bibitem{CCLH}
Casandjian, Charles; Challamel, Noel; Lanos, Christophe; Hellesland, Jostein, {\em Reinforced Concrete Beams, Columns and Frames: Mechanics and Design}, John Wiley \& Sons, 2013, 267--276.

\bibitem{G}
Gekeler, Ernst--Ulrich, A remark on dimensions of spaces of modular forms, {\em Arch. Math. (Basel)} 65
(1995), no. 6, 530--533.

\bibitem{M}
Martin, Greg, Dimensions of the spaces of cusp forms and newforms on $\Gamma_0(N)$ and $\Gamma_1(N)$, {\em J. Number Theory} 112 (2005), no. 2, 298--331.

\bibitem{MV}
Montgomery, Hugh L; Vaughan, Robert C., {\em Multiplicative number theory. I. Classical theory}, Cambridge Studies in Advanced Mathematics 97, Cambridge University Press, 2007.

\bibitem{RS} Rosser, J. Barkley; Schoenfeld, Lowell, Approximate formulas for some functions of prime numbers, {\em Ill. J. Math.} 6 (1962), 64--94.

\bibitem{S}
Shoup, Victor, {\em A Computational Introduction to Number Theory and Algebra}, Cambridge University Press, 2005.

\end{thebibliography}
\end{document}